\newtheorem{prop}[theorem]{Proposition}
\definecolor{myblue}{rgb}{0,0,0.6}
\newcommand{\ri}{{\mathrm{i}}}
\newcommand{\re}{{\mathrm{e}}}
\newcommand{\erf}{\mathrm{erf}}
\newcommand{\erfc}{\mathrm{erfc}}
\newcommand{\erfcx}{\mathrm{erfcx}}
\newcommand{\R}{\mathbb{R}}
\newcommand{\N}{\mathbb{N}}
\newcommand{\Z}{\mathbb{Z}}
\newcommand{\C}{\mathbb{C}}
\numberwithin{theorem}{section}
\newcommand{\TheTitle}{Computation of the Complex Error Function using Modified Trapezoidal Rules}
\newcommand{\TheAuthors}{Mohammad Al Azah, Simon N. Chandler-Wilde}
\title{{\TheTitle}}
\author{
  Mohammad Al Azah\thanks{School of Social and Basic Sciences, Al Hussein Technical University (HTU), Amman, Jordan
    (\email{mohammad.azah@htu.edu.jo}).}
  \and
  Simon N. Chandler-Wilde\thanks{Department of Mathematics and Statistics, University of Reading, Whiteknights, PO Box 220, Reading RG6 6AX, UK (\email{S.N.Chandler-Wilde@reading.ac.uk}).}
}
\begin{document}

\maketitle

\begin{abstract}
In this paper we propose a method for computing
the Faddeeva function $w(z) := \re^{-z^2}\erfc(-\ri\,z)$ via truncated modified trapezoidal rule approximations to integrals on the real line. Our starting point is the method due to Matta and Reichel ({\em Math.\ Comp.} {\bf 25} (1971), pp.~339--344) and Hunter and Regan ({\em Math.\ Comp.} {\bf 26} (1972), pp.~339--541). Addressing shortcomings flagged by  Weideman ({\em SIAM.\ J.\ Numer.\ Anal. } {\bf 31} (1994), pp.~1497--1518), we construct approximations which we prove are exponentially convergent as a function of $N+1$, the number of quadrature points, obtaining error bounds which show that accuracies of $2\times 10^{-15}$ in the computation of $w(z)$ throughout the complex plane are achieved with $N = 11$, this confirmed by computations. These approximations, moreover, provably achieve small relative errors throughout the upper complex half-plane where $w(z)$ is non-zero. Numerical tests suggest  that this new method is competitive, in accuracy and computation times, with existing methods for computing $w(z)$ for complex $z$.
\end{abstract}

\begin{keywords}
  Trapezoidal rule, complementary error function, Faddeeva function.
\end{keywords}

\begin{AMS}
  65D20, 65D30, 30D10, 30E20
\end{AMS}

\section{Introduction}
The  complementary error function is defined by \cite[(7.2.2)]{DL10}
\begin{equation}\label{erfc_dfn}
\erfc(z)=\frac{2}{\sqrt{\pi}}\int_{z}^{\infty}\re^{-t^2}\,dt,
\end{equation}
for all $z=x+\ri y$ ($x,y\in \R$).
This paper is concerned with approximating $\erfc(z)$ through approximating an integral representation for the related Faddeeva function, defined by \cite[(7.2.3)]{DL10}
\begin{equation} \label{wdef}
w(z) := \re^{-z^2}\erfc(-\ri\,z).
\end{equation}
It is well known \cite[(7.1.4)]{AS64} that 
\begin{equation}\label{wint}
w(z) = \frac{\ri }{\pi} \int_{-\infty}^\infty  \frac{\re^{-t^2}}{z-t} \, d t = \frac{\ri z}{\pi} \int_{-\infty}^\infty  \frac{\re^{-t^2}}{z^2-t^2} \, d t,\quad\mathrm{Im}(z)>0,
\end{equation}
and this is our starting point. It is sufficient to devise methods to compute $w(z)$ for $z$ in the first quadrant since values in the other quadrants can be obtained using the symmetries \cite[(3.1) and (3.2)]{PW90}
\begin{equation} \label{symm}
w(-z)=2\re^{-z^2}-w(z)\quad\mbox{and}\quad w(\overline{z})=\overline{w(-z)}.
\end{equation}

It follows from \eqref{wdef}--\eqref{wint} that
\begin{equation} \label{erfc_int}
\erfc(z) =\frac{z\,\re^{-z^2}}{\pi} \int_{-\infty}^\infty  \frac{\re^{-t^2}}{z^2+t^2} \, d t,\quad x = \mathrm{Re}(z)>0.
\end{equation}
Starting from this integral representation Chiarella and Reichel \cite{CR68} and Matta and Reichel \cite{MR71} showed, by the contour integration arguments that we recall in \S\ref{sec:BasicHistory}, that, for $x>0$,
\begin{equation}\label{I*(h,0)}
\erfc(z)=\frac{hz\re^{-z^2}}{\pi}\sum_{k=-\infty}^{\infty}\frac{\re^{-k^2 h^2}}{z^2+k^2 h^2}+\frac{2\,H\!(\pi/h-x)}{1-\re^{2\pi z/h}} + E(h).
\end{equation}
Here the first term is the trapezoidal rule approximation to \eqref{erfc_int} using a step size $h>0$, the second is a modification that arises from Cauchy's residue theorem (expressed using the standard Heaviside step function $H$, with $H(0)=1/2$), and $E(h)$ is a small error term. Hunter and Regan \cite{HR72} (correcting the argument in \cite{CR68,MR71}) show, for $x >0$ with $x\neq \pi/h$, that
\begin{equation} \label{Ehbound}
|E(h)| \leq \frac{2|z \re^{-z^2}|\, \re^{-\pi^2/h^2}}{\pi^{1/2}(1-\re^{-2\pi^2/h^2})|x^2-\pi^2/h^2|}.
\end{equation}
Thus, for every fixed $z=x+\ri y$ with $x>0$, the modified trapezoidal rule approximation obtained by neglecting $E(h)$ in \eqref{I*(h,0)} is very rapidly convergent indeed as $h\to0$.

As the bound \eqref{Ehbound} suggests, neglecting the error term $E(h)$ in \eqref{I*(h,0)} gives a very accurate approximation also for $x=\mathrm{Re}(z)=0$, except that the approximation is undefined if $z=\ri kh$, for some $k\in \Z$, and there are stability issues in evaluation if $z$ is close to one of these points. It is suggested in \cite{HR72} to solve this issue by switching to the composite midpoint rule where needed. Precisely, Hunter and Regan \cite{HR72} propose to use the formula \eqref{I*(h,0)} (neglecting the error term $E(h)$), if $1/4\leq \varphi(y/h)\leq 3/4$, where $y=\mathrm{Im}(z)$ and
\begin{equation}\label{fract_fn}
\varphi(t):=t-\left\lfloor t\right\rfloor\in[0,1)
\end{equation}
is the fractional part of $t$. Otherwise they suggest to use the midpoint-rule-based formula
\begin{equation}\label{I*(h,1/2)}
\erfc(z) = \frac{hz\re^{-z^2}}{\pi}\sum_{k=-\infty}^{\infty}\frac{\re^{-(k+1/2)^2 h^2}}{z^2+(k+1/2)^2 h^2}+\frac{2\,H\!\left(\pi/h-x\right)}{1+\re^{2\pi z/h}} + E^\prime(h),
\end{equation}
neglecting the corresponding error term $E^\prime(h)$ which they show satisfies the same bound \eqref{Ehbound} as $E(h)$.

These proposals from \cite{HR72} are our starting point. In a practical implementation the sums in \eqref{I*(h,0)} and \eqref{I*(h,1/2)} must be truncated, say to $-N\leq k\leq N$. The contributions of this paper are to:
\begin{itemize}
\item[i)] convert a modified version of the proposals of Hunter and Regan \cite{HR72} into a fully-specified algorithm, making clear how the choice of $h>0$ should be related to $N$ for optimal accuracy;
\item[ii)] provide error estimates for the approximations we propose for $w(z)$, proving that the maximum absolute error (and the maximum relative error in the upper-half plane) decrease exponentially with $N$, reducing by a factor $\re^\pi\approx 23.1$ for each additional quadrature point;
\item[iii)] demonstrate that the claimed exponential convergence in absolute and relative errors is achieved numerically;
\item[iv)] present numerical results that suggest that the simple approximation formulae we propose are competitive in accuracy and computation time with existing methods for computing $w(z)$.
\end{itemize}

In carrying out i) we are addressing earlier criticisms of the algorithms in \cite{CR68, MR71, HR72} made by Weideman \cite{Weid94}, who observes that the formula \eqref{I*(h,0)} with the summation reduced to $-N\leq k\leq N$
\begin{displayquote}
\emph{``is very accurate, provided for given $z$ and $N$ the optimal step size $h$ is selected. It is not easy, however, to determine this optimal $h$ a priori.''}
\end{displayquote}
Our recommendations address this issue, detailing which approximation formula and what step size $h$ should be used for each $N$ and $z$.  (It turns out that, to obtain high accuracy, the approximation formula chosen must depend on both $z$ and $N$ but $h$ only needs to depend on $N$.)

The bounds we obtain in carrying out ii) prove that the absolute error in our approximation for $w(z)$ tends to zero exponentially with $N$, uniformly in the complex plane. This is a substantial improvement on the existing bound \eqref{Ehbound} which blows up when $x=\pi/h$, and does not capture the additional truncation errors due to replacing infinite by finite sums in the approximations \eqref{I*(h,0)} and \eqref{I*(h,1/2)}.

Concretely, our proposed approximation to $w(z)$, for $z=x+\ri y$, with $x,y\geq0$, is
\begin{eqnarray}\label{wNdef}
w_{N}(z):=
\begin{cases}
w_{N}^{\mathrm{M}}(z),&\quad \mbox{if } y \geq \max\left(x,\pi/h\right),\\
w_{N}^{\mathrm{MT}}(z),&\quad \mbox{if }y< x\quad\mbox{and}\quad 1/4\leq\varphi\left(x/h\right)\leq 3/4,\\
w_{N}^{\mathrm{MM}}(z),&\quad\mbox{otherwise},
\end{cases}
\end{eqnarray}
where $\varphi$ is defined by \eqref{fract_fn}, $N\in \N_0:= \N\cup\{0\}$,
\begin{eqnarray} \label{hdef}
h &:=&\sqrt{\pi\big/(N+1)}\,,\\ \label{IN1}
w_{N}^{\mathrm{M}}(z)&:=& \displaystyle\frac{2\ri\,h\,z}{\pi} \,\sum_{k=0}^{N} \frac{\re^{-t_{k}^{2}}}{ z^2-t_{k}^{2}},\\\label{IN2}
w_{N}^{\mathrm{MM}}(z)&:=&\displaystyle\frac{2\,\re^{-z^2}}{1+\re^{-2\ri\pi z/h}}+w_{N}^{\mathrm{M}}(z),\\\label{IN3}
w_{N}^{\mathrm{MT}}(z)&:=&\displaystyle\frac{2\,\re^{-z^2}}{1-\re^{-2\ri\pi z/h}}+\frac{\ri\,h}{\pi z}+\frac{2\ri\,h\,z}{\pi} \,\sum_{k=1}^{N}\frac{\re^{-\tau_{k}^{2}}}{ z^2-\tau_{k}^{2}},\\
\label{tkdef}
t_{k}&:=&(k+1/2)h,\quad \mbox{and} \quad \tau_{k}:=kh.
\end{eqnarray}

We extend the approximation to the full complex plane by using the symmetries \eqref{symm}, precisely by defining
\begin{equation} \label{symm2}
w_N(z):=\overline{w_N(-\overline{z})}, \mbox{ if } y \geq 0 \mbox{ and } x<0, \quad w_N(z):=2\re^{-z^2}-w_N(-z), \mbox{ if } y<0.
\end{equation}
We supply in Table SM1 of the supplementary materials \cite{ACW202} the Matlab code implementing the approximation $w_N(z)$ that we use for the computations in \S\ref{sec: w_num}\footnote{The codes in these supplementary materials are also available at github, see \url{https://github.com/sms03snc/Faddeeva}.}.

The main error estimate that we prove, using standard complex analysis arguments including a Phragm\'en-Lindel\"of principle, is
\begin{theorem}\label{thm: w_main}
Suppose $w_{N}(z)$ is given by \eqref{wNdef} and \eqref{symm2}. Then, for $N\in \N_0$ and $z=x+\ri y$,
{ \begin{eqnarray*}
|w(z)-w_{N}(z)|&\leq& C_1\,\re^{-\pi N}, \quad { \mbox{for all } x,y\in \R, \quad \mbox{and}}\\
\frac{|w(z)-w_{N}(z)|}{|w(z)|}&\leq& C_2\,\sqrt{N+1}\,\re^{-\pi N}, \quad { \mbox{if } x\in \R \mbox{ and } y\geq 0},
\end{eqnarray*}
where $C_1\approx 0.67$ is given by \eqref{C1def} and $C_2 \approx 3.97$ by \eqref{C2def}.}
\end{theorem}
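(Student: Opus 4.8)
The plan is to prove the absolute-error bound first and then derive the relative-error bound from it, in both cases exploiting the symmetries \eqref{symm} and \eqref{symm2} to reduce to $z=x+\ri y$ with $x,y\geq0$, and then splitting into the three regimes of \eqref{wNdef}. First I would establish, from the Chiarella--Reichel/Matta--Reichel contour-integration identities \eqref{I*(h,0)} and \eqref{I*(h,1/2)} restated in terms of $w$ via \eqref{wdef}, exact representations of the truncation error in each of the three cases, i.e.\ expressions for $w(z)-w_N^{\mathrm{M}}(z)$, $w(z)-w_N^{\mathrm{MM}}(z)$, and $w(z)-w_N^{\mathrm{MT}}(z)$ as a sum of (a) the ``modified trapezoidal/midpoint rule error'' $E(h)$ or $E'(h)$ (the analytic tail error from Poisson summation / residue calculus, controlled by a bound of the type \eqref{Ehbound}) and (b) the ``series-truncation error'', the tail $\sum_{|k|>N}$ of the infinite sum that was dropped in passing to \eqref{IN1}--\eqref{IN3}. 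With $h=\sqrt{\pi/(N+1)}$ as in \eqref{hdef} one has $\pi^2/h^2=\pi(N+1)=\pi N+\pi$, so the analytic error $E(h)$ carries exactly the factor $\re^{-\pi^2/h^2}=\re^{-\pi}\re^{-\pi N}$; the job is to show the series-truncation tail has the same exponential order.

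The key quantitative step is therefore to bound the dropped tail $\bigl|\tfrac{2\ri h z}{\pi}\sum_{k>N}\re^{-t_k^2}/(z^2-t_k^2)\bigr|$ (and its analogue with $\tau_k=kh$). Here I would use that $t_k=(k+1/2)h\geq (N+3/2)h$, so $t_k^2\geq (N+1)h^2\cdot\frac{(k+1/2)^2}{N+1}$ and in particular $\re^{-t_k^2}\leq \re^{-\pi(k+1/2)^2/(N+1)}$; summing a Gaussian tail against the slowly varying factor $|z/(z^2-t_k^2)|$ gives a bound that decays like $\re^{-\pi(N+1)}$ times a factor polynomial (indeed $O(\sqrt{N+1})$, from the Gaussian sum/ integral $\int_{N}^\infty \re^{-\pi s^2/(N+1)}\,ds$) — this is where the $\sqrt{N+1}$ in the relative bound ultimately originates. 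The only delicate point is the region where $z^2$ is close to a node $t_k^2$ or $\tau_k^2$: this is precisely why the algorithm \eqref{wNdef} switches formulae, and I would use the case conditions ($y\geq\max(x,\pi/h)$, or $1/4\le\varphi(x/h)\le 3/4$) to bound $|z^2-t_k^2|$ and $|z^2-\tau_k^2|$ away from $0$ — e.g.\ in the ``MT'' case the constraint on $\varphi(x/h)$ forces $|x^2-\tau_k^2|$ to be bounded below by a constant multiple of $h\max(x,\tau_k)$, controlling the near-diagonal terms, while the extra summand $\ri h/(\pi z)$ in \eqref{IN3} is exactly the $k=0$ midpoint contribution isolated so that the remaining sum is regular. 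Assembling these estimates over the three cases, and combining with the explicit residue terms $2\re^{-z^2}/(1\mp\re^{-2\ri\pi z/h})$ which are uniformly bounded (using $\mathrm{Im}(z)\ge 0$ and, in the ``M''/``MM'' split, the sign of $y-\pi/h$ to keep the geometric-series denominator away from $0$), yields $|w(z)-w_N(z)|\le C_1\re^{-\pi N}$ with an explicit, computable $C_1$; tracking the constants carefully is what produces the claimed $C_1\approx0.67$ in \eqref{C1def}.

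For the relative bound on the upper half-plane I would first note that it suffices, by the symmetry $w_N(z)=\overline{w_N(-\overline z)}$ and $w(z)=\overline{w(-\overline z)}$, to treat $x\ge0$, $y\ge0$. The mechanism is a classical lower bound for $|w(z)|$ in the closed upper half-plane: $w$ is zero-free there (its zeros all lie in the lower half-plane), and one has quantitative lower bounds of the form $|w(z)|\gtrsim 1/(1+|z|)$ or, more precisely, the known inequality $|w(z)|\ge \mathrm{something}\cdot \re^{-x^2}$ combined with $|w(z)|\ge c/\sqrt{\pi}\,(|y|+\sqrt{x^2+\ldots})^{-1}$ type estimates; dividing the absolute bound $C_1\re^{-\pi N}$ by this lower bound and absorbing the worst-case growth (which near the real axis behaves like $|z|$, but this is exactly compensated because the absolute error itself carries a matching decay in the relevant regime) produces $C_2\sqrt{N+1}\,\re^{-\pi N}$. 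The cleanest route, and the one I would actually pursue, is not to divide two independent bounds but to redo the three-case estimate with the error written as $|w(z)-w_N(z)|/|w(z)|$ from the start, so that factors of $z$ and $\re^{-z^2}$ in numerator and denominator cancel before estimating; a Phragm\'en--Lindel\"of argument is then the natural tool to pass from bounds on the boundary rays $\{x=0\}$ and $\{y=0,\ x\ge 0\}$ (and at $\infty$) to a bound on the whole quadrant, since $(w(z)-w_N(z))/w(z)$, times a suitable $\re^{-\ri\pi z/h}$-type regularizing factor to kill the residue-term poles, is analytic and of finite exponential type there.

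The main obstacle I anticipate is the near-diagonal analysis: showing that the case conditions in \eqref{wNdef} really do keep $|z^2-t_k^2|$ (resp.\ $|z^2-\tau_k^2|$) bounded below by the right power of $h$ uniformly in $z$ and $k$, so that the Gaussian tail bound goes through with a clean constant, and then making the Phragm\'en--Lindel\"of step rigorous requires exhibiting an auxiliary entire (or meromorphic-with-cancelled-poles) function with controlled growth whose modulus on the two boundary rays is bounded — verifying the growth hypothesis at infinity in the first quadrant is the kind of estimate that is easy to believe but fiddly to write down, and is where I would expect to spend most of the effort in converting this sketch into a full proof.
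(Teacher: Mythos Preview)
Your overall architecture --- reduce by symmetry to the first quadrant, split the error into the quadrature-rule error plus the series-truncation tail, use the case split in \eqref{wNdef} to keep $z$ away from the quadrature nodes, and get the relative bound via a lower bound on $|w(z)|$ in the upper half-plane --- is the same as the paper's. But two steps as you describe them are genuine gaps, not just details to fill in.

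First, you invoke ``a bound of the type \eqref{Ehbound}'' to control the analytic error $E(h)$. That bound is \emph{not} uniform in $z$: the factor $1/|x^{2}-\pi^{2}/h^{2}|$ blows up as $x\to\pi/h$, and with $h$ fixed by \eqref{hdef} this is a genuine point of the first quadrant. Getting a bound on the quadrature error that is uniform in $z$ is the main work of the paper. The device is to run the contour argument not with $H=\pi/h$ but with $H=\pi/h\pm\epsilon$ for a fixed $\epsilon$ (the paper takes $\epsilon=1/\sqrt{2}$): the shift keeps $\Gamma_{H}$ a uniform distance from the poles of $F_{z}$ at the cost of only an $O(1)$ constant in the exponent. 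Without this idea the constant $C_{1}$ in your bound would be infinite.

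Second, your Phragm\'en--Lindel\"of step as written does not go through. You propose to apply it on the quarter-plane with boundary rays $\{x=0\}$ and $\{y=0\}$, to (a regularised version of) $w-w_{N}$ or $(w-w_{N})/w$. But $w_{N}$ is defined piecewise by \eqref{wNdef} and is not analytic across the case boundaries, so no multiplicative factor will make $w-w_{N}$ analytic in the open quadrant. Even if you work instead with the entire function $w(z)-I^{*}_{h,\alpha}[f_{z}]$ for a fixed $\alpha$, the correction term $C_{h,\alpha}[f_{z}]$ grows like $\re^{y^{2}-2\pi y/h}$ near the imaginary axis, so the growth hypothesis $b<a=2$ of the quarter-plane Phragm\'en--Lindel\"of principle fails. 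The paper avoids both issues by applying the principle only in the \emph{eighth}-plane $\{0\le y\le x\}$ (so $a=4$), to the entire function $w(z)-I^{*}_{h,\alpha}[f_{z}]$, where the growth is merely polynomial; the complementary sector $\{0\le x\le y\}$ is handled directly by the contour-shift estimates, with no maximum-principle argument needed. The case split \eqref{wNdef} enters only afterwards, when the separately-bounded truncation tail is added back.

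A minor correction: the $\sqrt{N+1}$ in the relative bound does not arise from the Gaussian tail integral (that contribution is $O(1)$ once the factor $\re^{-\pi N}$ is extracted). It comes from the lower bound $|w(z)|\ge 1/(1+\sqrt{\pi}\,|z|)$: dividing by $|w(z)|$ introduces a factor $1+\sqrt{\pi}\,|z|$, and the worst case $|z|\sim\pi/h=\sqrt{\pi(N+1)}$ is attained near the switching boundary $y=\pi/h$ in \eqref{wNdef}.
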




We remark that approximation of $w(z)$, for $z\in \C$, provides an effective route to the computation of other special functions, including Fresnel integrals (e.g., \cite{Mohd14}), Dawson's integral \cite[Equation (7.5.1)]{DL10}, and the Voigt functions \cite[Equation (7.19.3)]{DL10}. Indeed, we have previously used, in the restricted case $\arg(z)=\pi/4$, an approximation resembling $w_N^{\mathrm{MM}}(z)$ when approximating Fresnel integrals \cite{Mohd14}, proving results in the spirit of Theorem \ref{thm: w_main}.

Let us summarise the rest of the paper. In the largest section, \S\ref{w_approx}, we derive the above formulae and error bounds. In \S\ref{sec:survey} we review the existing, alternative approximate methods for computing $\erfc(z)$ and $w(z)$ for complex $z$. For none of these has an error bound been proved, similar to Theorem \ref{thm: w_main}. In \S\ref{sec: w_num} we carry out numerical experiments that  confirm the accuracy of $w_N(z)$, showing that its absolute error is $<2\times 10^{-15}$ throughout the complex plane with $N=11$, and that the same bound holds for the relative error in the upper half-plane. We also show that our new approximation is competitive in accuracy and  appears to be competitive in computing times with  the methods that we survey in \S\ref{sec:survey}, specifically those of \cite{Weid94,Zag12,Zaghloul16,Abrar18}.

We note that this paper is based, in significant part, on Chapter 3 of the first author's thesis \cite{Mohd17}.

\section{The proposed approximation and its error bounds}\label{w_approx}
In this section we derive the approximation  given by \eqref{wNdef} based on modified trapezoidal rules. We also derive the error bounds of Theorem \ref{thm: w_main} that demonstrate that the absolute and relative errors in $w_N(z)$ both decrease exponentially as $N$ increases.

\subsection{The contour integral argument and its history} \label{sec:BasicHistory} Given any $f\in C(\R)$ that decays sufficiently rapidly at infinity, let
$$
I[f] := \int_{-\infty}^\infty f(t)\, d t,
$$
and, for $h>0$ and $\alpha\in[0,1)$, define the {\em generalised trapezoidal rule approximation} to $I[f]$ by
\begin{equation}\label{IhdefnC1}
I_{h,\alpha}[f]:=\displaystyle h\,\sum_{k\in\mathrm{Z}}f((k-\alpha)h).
\end{equation}
We note that $I_{h,\alpha}[f] = I_{h,0}[f_\alpha]$, where $f_\alpha(t):= f(t-\alpha h)$ for $t\in \R$, and that $I_{h,0}[f]$ is the trapezoidal rule approximation to $I[f]$ and $I_{h,1/2}[f]$ its composite midpoint rule approximation.

The approximation \eqref{IhdefnC1} for $I[f]$ converges exponentially when the integrand is analytic in a strip surrounding the real axis and has sufficient decay at $\pm\infty$. The derivation of this result, using contour integration and Cauchy's residue theorem, dates back, for a particular case, at least to Turing \cite{Tur45}, and has been analysed in more general cases by Goodwin \cite{Good49}, 
Davis \cite{Da:59}, McNamee \cite{Mc64}, Schwartz \cite{Sch69} and Stenger \cite{St73}. For a detailed history and discussion see Trefethen and Weideman \cite{TW14}.

The rate of exponential convergence depends on the width of the strip of analyticity around the real axis, and the accuracy of $I_{h,\alpha}[f]$ deteriorates when $f$ has singularities close to the real line. But, in the case when these singularities are poles, the contour integral method for establishing the exponential convergence of  $I_{h,\alpha}[f]$, that we will recall in Proposition \ref{prop:basic} below, leads naturally to corrections for modifying the trapezoidal rule and recovering rapid convergence, these corrections expressed in terms of residues of $f$ at these poles. This appears to have been observed explicitly first by Chiarella and Reichel \cite{CR68}, in the context of evaluating \eqref{erfc_int} (and see Matta and Reichel \cite{MR71}, Hunter and Regan \cite{HR72}, and Mori \cite{Mo83}), and has been developed into a general theory by Bialecki \cite{Bia89} (and see La Porte \cite{Scott07}).

It is convenient to recall in a proposition the standard arguments (\cite{CR68,MR71,HR72} and cf.\ \cite[pp.\ 402--403]{TW14}) that are made to prove exponential convergence, since we will use these arguments below. These use, for given $h>0$ and $\alpha\in[0,1)$, the function $g(\zeta)$ defined by
\begin{equation}\label{gdefnC1}
g(\zeta):=\ri\,\cot\left(\pi\left(\alpha + \zeta/h\right)\right).
\end{equation}
This is a meromorphic function with simple poles at $\zeta=(k-\alpha)h$, $k\in\Z$, and the properties that, for $\zeta\in \C$ with $\eta=\mathrm{Im}(\zeta)$,
\begin{equation}\label{gbound1C1}
|1-g(\zeta)|\leq \frac{2e^{-2\pi \eta/h}}{1- e^{-2\pi \eta/h}}, \;\; \mbox{ if } \eta>0, \quad |1+g(\zeta)|\leq \frac{2e^{2\pi \eta/h}}{1- e^{2\pi \eta/h}}, \;\; \mbox{ if } \eta<0.
\end{equation}
We also use, for $H\in \R$, the notation $\Gamma_H$ for the path $\{t+\ri H:t\in\R\}$ traversed in the direction of increasing $t$.  It is enough for our purposes to suppose that the poles of $f$ are simple. For the case of poles of arbitrary order see \cite[Theorem 2.2]{Bia89}.
\begin{prop}\label{prop:basic} Suppose that, for some $H>0$, $f$ is analytic in the strip $S_H:=\{\zeta\in \C:|\mathrm{Im}(\zeta)|<H\}$, except for a finite number of simple poles at $\zeta_1,\ldots,\zeta_m\in S_H$, with $\eta_k:=\mathrm{Im}(\zeta_k)\neq 0$, for $k=1,\ldots,m$. Suppose also that  $f$ is continuous in $\overline{S_H}\setminus\{\zeta_1,\ldots,\zeta_m\}$ and that, for some $r>1$, $f(\zeta)=O(|\zeta|^{-r})$ as $|\mathrm{Re}(\zeta)|\to \infty$, uniformly in $S_H$. Then
$$
I[f]-I_{h,\alpha}[f]= \frac{1}{2}\left(\int_{\Gamma_H}f(\zeta)(1-g(\zeta))\, d\zeta +  \int_{\Gamma_{-H}}f(\zeta)(1+g(\zeta))\, d\zeta\right) + C_{h,\alpha,H}[f],
$$
where
$$
C_{h,\alpha,H}[f] := \pi\ri \sum_{k=1}^m \left(\mathrm{sign}(\eta_k)-g(\zeta_k)\right)R_k,
$$
and $R_k := \mathrm{Res}(f,\zeta_k)=\lim_{\zeta\to \zeta_k}(\zeta-\zeta_k)f(\zeta)$ denotes the residue of $f$ at $\zeta_k$.
\end{prop}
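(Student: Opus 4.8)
The plan is to derive two integral--residue identities, one for $I[f]$ and one for $I_{h,\alpha}[f]$, by applying Cauchy's residue theorem on rectangles aligned with the strip $S_H$, and then to subtract them. The ingredient linking the two is $g$: as recalled above it is meromorphic with simple poles exactly at the nodes $\zeta=(j-\alpha)h$, $j\in\Z$, and a one-line computation (from $\mathrm{Res}(\cot w,j\pi)=1$ together with the substitution $w=\pi(\alpha+\zeta/h)$) gives $\mathrm{Res}(g,(j-\alpha)h)=\ri h/\pi$. Moreover $g$ is analytic on $\overline{S_H}$ off the real axis, is bounded on $\Gamma_{\pm H}$ by \eqref{gbound1C1}, and --- crucially for the limiting argument --- on the vertical lines $\mathrm{Re}(\zeta)=(n+\tfrac12-\alpha)h$, $n\in\Z$, midway between consecutive nodes, one has $g(\zeta)=\tanh(\pi\,\mathrm{Im}(\zeta)/h)$, so that $|g|\le\tanh(\pi H/h)<1$ there.

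First I would fix $R_n:=(n+\tfrac12-\alpha)h$ and apply the residue theorem to $f$ on the two sub-rectangles with vertices $\pm R_n,\ \pm R_n+\ri H$ and $\pm R_n,\ \pm R_n-\ri H$, using that $f$ has no poles on $\Gamma_0=\R$ and, for $n$ large, none on the vertical sides. Letting $n\to\infty$, the vertical-side integrals are $O(R_n^{1-r})\to0$ by the decay hypothesis, and the horizontal sides converge absolutely (since $r>1$) to $\int_{\Gamma_0}f=I[f]$ and to $\int_{\Gamma_{\pm H}}f$, with the residue sum split between the two rectangles according to $\mathrm{sign}(\eta_k)$; rearranging each identity as an expression for $I[f]$ and averaging gives
\begin{equation*}
I[f]=\tfrac12\!\left(\int_{\Gamma_H}f\,d\zeta+\int_{\Gamma_{-H}}f\,d\zeta\right)+\pi\ri\sum_{k=1}^m \mathrm{sign}(\eta_k)\,R_k.
\end{equation*}

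Next I would apply the residue theorem to $fg$ on the single rectangle with vertices $\pm R_n\pm\ri H$. Its poles inside are the $\zeta_k$, where $g$ is analytic, so with residue $g(\zeta_k)R_k$, and the finitely many nodes $(j-\alpha)h\in(-R_n,R_n)$, where $f$ is analytic, so with residue $f((j-\alpha)h)\cdot\ri h/\pi$. The vertical sides again drop out as $n\to\infty$ (now using $|g|\le1$ there), the top and bottom give $\int_{\Gamma_H}fg-\int_{\Gamma_{-H}}fg$ (with the sign because the top side is traversed as $-\Gamma_H$), and $\sum_{j}f((j-\alpha)h)$ converges, by $r>1$, to $I_{h,\alpha}[f]/h$; using $2\pi\ri\cdot(\ri h/\pi)=-2h$ this yields
\begin{equation*}
I_{h,\alpha}[f]=\tfrac12\!\left(\int_{\Gamma_H}fg\,d\zeta-\int_{\Gamma_{-H}}fg\,d\zeta\right)+\pi\ri\sum_{k=1}^m g(\zeta_k)\,R_k.
\end{equation*}
Subtracting this from the previous display, and grouping $\int_{\Gamma_H}f-\int_{\Gamma_H}fg=\int_{\Gamma_H}f(1-g)$ and $\int_{\Gamma_{-H}}f+\int_{\Gamma_{-H}}fg=\int_{\Gamma_{-H}}f(1+g)$, gives exactly the asserted identity with $C_{h,\alpha,H}[f]=\pi\ri\sum_{k}(\mathrm{sign}(\eta_k)-g(\zeta_k))R_k$.

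The step needing the most care is the passage $n\to\infty$: one must place the vertical sides precisely at the midpoints $R_n=(n+\tfrac12-\alpha)h$ so that $g$ (equivalently $\cot$) stays bounded on them, after which the decay exponent $r>1$ does double duty --- it kills the vertical-side integrals and simultaneously guarantees absolute convergence both of the integrals over $\Gamma_{\pm H}$ and of the quadrature series defining $I_{h,\alpha}[f]$. The remaining points --- the elementary residue of $g$, the orientation bookkeeping (top sides appearing as $-\Gamma_{\pm H}$), and the validity of the residue theorem for a function merely continuous up to the rectangle boundary --- are routine.
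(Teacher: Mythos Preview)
Your proof is correct and follows essentially the same approach as the paper: apply the residue theorem to $f$ and to $fg$ on rectangles whose vertical sides lie at the midpoints $(n+\tfrac12-\alpha)h$ between nodes (the paper's $A_k$), let the rectangles expand, and subtract. The only cosmetic difference is that the paper first works at height $\widetilde H<H$ and then lets $\widetilde H\to H$ to handle the fact that $f$ is merely continuous on $\partial S_H$, whereas you relegate this point to a closing remark; otherwise the arguments coincide.
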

\begin{figure}
\centering
\begin{tikzpicture}[scale=1]
\tikzset{myptr/.style={decoration={markings,mark=at position 1 with %
    {\arrow[scale=2]{>}}},postaction={decorate}}}
\draw [myptr] (-5,0)--(5.2,0);
\draw(5.2,-0.1) node[below]{$\mathrm{Re}$};
\draw[myptr](0,-2.5)--(0,3);
\draw(-0.1,3) node[left]{$\mathrm{Im}$};
\draw [thick] (-5,2)--(5,2);
\draw (0,2.2) node[left]{$H$};
\draw(4,2)node[above]{$\Gamma_H$};
\draw [thick] (-5,-2)--(5,-2);
\draw (0,-2.2) node[left]{$-H$};
\draw(4,-2)node[below]{$\Gamma_{-H}$};
\foreach \x in {-4.7,-4.2,...,5}
\filldraw (\x,0) circle (1pt);
\filldraw (-3,-1.5) circle (1pt);
\draw(-3,-1.5) node[below]{$\zeta_1$};
\filldraw (-1.8,-0.5) circle (1pt);
\draw(-1.8,-0.5) node[below]{$\zeta_2$};
\filldraw (-0.6,0.7) circle (1pt);
\draw(-0.6,0.7) node[below]{$\zeta_3$};
\draw (0.5,0.75) node{$\cdots$};
\filldraw (1.6,0.8) circle (1pt);
\draw(1.6,0.8) node[below]{$\zeta_{m-1}$};
\filldraw (2.8,1.1) circle (1pt);
\draw(2.8,1.1) node[below]{$\zeta_m$};
\draw[dashed,myptr] (-3.95,1.9)--(-3.95,-1.9)--(3.55,-1.9)--(3.55,1);
\draw(3.55,1.1)node[right]{$C_{\widetilde H,i,j}$};
\draw[dashed] (3.55,1)--(3.55,1.9)--(-3.95,1.9);
\end{tikzpicture}
\caption{The contour $C_{\widetilde H,i,j}$ used in the proof of Proposition \ref{prop:basic}. The dots on the real line are the poles of $g(\zeta)$ at $(k-\alpha)h$, for $k\in \Z$.} \label{fig:main}
\end{figure}
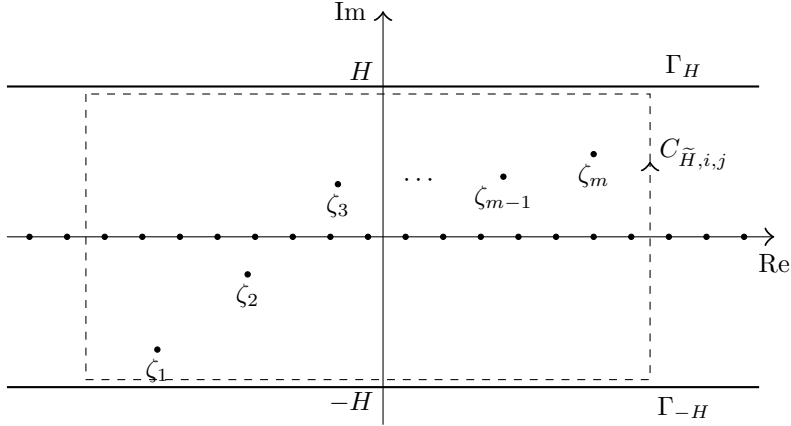
\begin{proof} Let $A_k := \left(k-\alpha + \frac{1}{2}\right)h$, for $k\in \Z$. Let $C_{\widetilde H,i,j}$ denote the positively oriented rectangular contour with corners at $A_i\pm \ri \widetilde H$ and $A_j\pm \ri \widetilde H$, choosing $\widetilde H\in (0,H)$ and the integers $i<0$ and $j>0$ so that $C_{\widetilde H,i,j}$ encloses the poles $\zeta_1,\ldots,\zeta_m$ (see Figure \ref{fig:main}). Noting that $\mathrm{Res}(g,(k-\alpha)h)= \ri h/\pi$, for $k\in \Z$, we apply Cauchy's residue theorem to
$$
\int_{C_{\widetilde H,i,j}}f(\zeta)g(\zeta) \, d\zeta.
$$
We take the limit $\widetilde H\to H$, and then the limit as $i\to -\infty$ and $j\to+\infty$ which leads (see \cite[pp.\ 402--403]{TW14} for more detail) to
$$
\int_{\Gamma_{-H}}f(\zeta)g(\zeta)\, d\zeta - \int_{\Gamma_H}f(\zeta)g(\zeta)\, d\zeta  =-2I_{h,\alpha}[f]+ 2\pi\ri \sum_{k=1}^m g(\zeta_k)R_k.
$$
Making similar applications of Cauchy's residue theorem we obtain also that
$$
\int_{\Gamma_H}f(\zeta)\, d\zeta +  \int_{\Gamma_{-H}}f(\zeta)\, d\zeta = 2I[f] - 2\pi\ri \sum_{k=1}^m \mathrm{sign}(\eta_k)R_k,
$$
and the result follows.
\end{proof}

Thanks to the bounds \eqref{gbound1C1} it follows from the above proposition that
\begin{equation}\label{eq:mainbound}
|I[f]-I^*_{h,\alpha,H}[f]|\leq  \frac{e^{-2\pi H/h}}{1- e^{-2\pi H/h}}\int_{\Gamma_H}\left(|f(\zeta)|+|f(-\zeta)|\right)\, |d\zeta|,
\end{equation}
where
\begin{equation}\label{eq:ModDef}
I^*_{h,\alpha,H}[f]:= I_{h,\alpha}[f] + C_{h,\alpha,H}[f]
\end{equation}
is what we will call the {\em modified generalised trapezoidal rule}. If $f$ is analytic in $S_H$, so that $I^*_{h,\alpha,H}[f]= I_{h,\alpha}[f]$, this bound reduces to \cite[Theorem 5.1]{TW14}, and proves that $I_{h,\alpha}[f]$ is exponentially convergent as $h\to 0$. In the more general case that $f$ has simple pole singularities in $S_H$, the bound \eqref{eq:mainbound} proves the same rate of exponential convergence for $I^*_{h,\alpha,H}[f]$, the trapezoidal rule modified to take into account these poles of $f$.

Our application of the above proposition and bound will be to the integrals given by \eqref{erfc_int} and \eqref{wint}.
In these cases (cf.\ Goodwin \cite{Good49}) we have additionally that
\begin{equation} \label{eq:fF}
f(\zeta)= \re^{-\zeta^2}F(\zeta),
\end{equation}
where $F$ is even and $F(\zeta)=O(1)$ as $|\mathrm{Re}(\zeta)|\to\infty$, uniformly in $S_H$. This satisfies the conditions of the above proposition, and, since $|\exp(-\zeta^2)|=\re^{H^2-t^2}$ for $\zeta=t+\ri H$, and $\int_{-\infty}^\infty \exp(-t^2)\, dt = \sqrt{\pi}$, the bound \eqref{eq:mainbound} implies in this case that
\begin{equation}\label{eq:mainbound2}
\left|I[f]-I^*_{h,\alpha,H}[f]\right|\leq  \frac{2\re^{H^2-2\pi H/h}}{1- \re^{-2\pi H/h}}\, \int_{-\infty}^\infty \re^{-t^2} |F(t+\ri H)|\, dt \leq \frac{2\sqrt{\pi}M\re^{H^2-2\pi H/h}}{1- \re^{-2\pi H/h}},
\end{equation}
where
\begin{equation} \label{eq:Mdef}
M:= \sup_{t\in \R}|F(t+\ri H)|.
\end{equation}

An important observation, particularly when applying the above bound in cases where $F$ is meromorphic in the whole complex plane, is that the exponent $H^2-2\pi H/h$ is minimised by the choice $H=\pi/h$, in which case  $H^2-2\pi H/h = -\pi^2/h^2$. The representations \eqref{I*(h,0)} and \eqref{I*(h,1/2)}, with the bound \eqref{Ehbound} (which bounds both $|E(h)|$ and $|E^\prime(h)|$), follow from applying \eqref{eq:mainbound2} to \eqref{erfc_int}. Precisely, we apply \eqref{eq:mainbound2} with $H=\pi/h$ and $\alpha =0$ and $\alpha=1/2$, in the respective cases \eqref{I*(h,0)} and \eqref{I*(h,1/2)}, noting that \eqref{erfc_int} can be written as $\erfc(z)=I[f]$ with $f$ given by \eqref{eq:fF} and $F(t) := z\exp(-z^2)/(\pi(z^2+t^2))$. To obtain the bound \eqref{Ehbound} from \eqref{eq:mainbound2} and \eqref{eq:Mdef} we note further that, for $z=x+\ri y$ and $\zeta = t+\ri \pi/h$, $|z^2+\zeta^2|=|z+\ri\zeta|\,|z-\ri\zeta|\geq |x-\pi/h| |x+\pi/h| =|x^2-\pi^2/h^2|$. (We remark that the blow-up when $x=\pi/h$ in the bound \eqref{Ehbound} is an artefact of the method of argument, that the contour $\Gamma_H$ with $H=\pi/h$ passes through a pole of the function $F$ when $x=\pi/h$.)

\subsection{Trapezoidal rule error estimates}
In this subsection we apply the above methods and bounds to obtain uniform absolute and relative error estimates as a replacement for the bound \eqref{Ehbound}. Except where explicitly indicated otherwise, all the results we prove hold for all $h>0$ and all $\alpha\in [0,1)$. In the estimates we obtain in Proposition \ref{prop:new1} we avoid the equivalent of the blow-up seen in \eqref{Ehbound} when $x=\pi/h$  by making two separate applications of the contour integral argument, with $H=\pi/h\pm \epsilon$. It turns out that the $\epsilon$ correction to the optimal choice $H=\pi/h$ incurs only an $O(1)$ factor correction to the error bound as long as $\epsilon$ is fixed independently of $h$. At the same time, if we take  a fixed $\epsilon$ and switch as needed between $H=\pi/h+\epsilon$ and $H=\pi/h-\epsilon$ we can ensure that $\Gamma_H$ remains at least an $O(1)$ distance from the poles of the integrand as we vary $z$.

For $z=x+\ri y$ with $y>0$ we
write \eqref{wint} as $w(z)=I[f_z]$ where
\begin{eqnarray}\label{w_f}
f_z(t):= e^{-t^2}F_z(t)\quad\mbox{and}\quad F_z(t):=\frac{\ri\,z}{\pi(z^2-t^2)}.
\end{eqnarray}
The even function $f_z$ is meromorphic with simple poles at $t = \pm z$ and residues
\begin{eqnarray}\label{w_res}
R_{1}:=\mathrm{Res}\left(f_z,z \right)= \frac{-\ri\,\re^{-z^2}}{2\pi}\quad\mbox{and}\quad R_{2}:=\mathrm{Res}\left(f_z,-z \right)=-R_{1}.
\end{eqnarray}
Thus, using the notation of Proposition \ref{prop:basic},
\begin{equation}\label{CF1.1}
C_{h,\alpha,H}[f_z] =\left\{\begin{array}{ll}
                          2\re^{-z^2}/(1-\re^{-2\ri\pi(\alpha + z/h)}), & \mbox{if } H>y, \\
                          0, & \mbox{if } H<y,
                        \end{array}\right.
\end{equation}
and the trapezoidal rule approximation to $w(z)=I[f_z]$ is
\begin{eqnarray}\label{w_Ih}
\;\;\;\;\;\; I_{h,\alpha}[f_z]=h\sum_{k\in\mathbb{Z}}\frac{\ri z\,\re^{-(k-\alpha)^2h^2}}{\pi(z^2-(k-\alpha)^2h^2)} = \left\{\begin{array}{ll}
                          {\displaystyle\frac{h\ri z}{\pi}\left[\frac{1}{z^2}+2\sum_{k=1}^\infty\frac{\re^{-\tau_k^2}}{z^2-\tau_k^2}\right]}, & \alpha=0, \\
                          {\displaystyle\frac{2h\ri z}{\pi}\sum_{k=0}^\infty\frac{\re^{-t_k^2}}{z^2-t_k^2}}, & \alpha=\frac{1}{2},
                        \end{array}\right.
\end{eqnarray}
where $t_k$ and $\tau_k$ are as defined in \eqref{tkdef}. It is useful also to introduce the notations
\begin{equation} \label{eq:Cf2}
C_{h,\alpha}[f_z] :=\lim_{H\to\infty} C_{h,\alpha,H}[f_z]=
                          \frac{2\re^{-z^2}}{1-\re^{-2\ri\pi(\alpha + z/h)}},
\end{equation}
and
\begin{equation} \label{eq:If*2}
I^*_{h,\alpha}[f_z] :=\lim_{H\to\infty} I^*_{h,\alpha,H}[f_z]= I_{h,\alpha}[f_z] + C_{h,\alpha}[f_z],
\end{equation}
where $I^*_{h,\alpha,H}[f_z]$ is the modified trapezoidal rule defined by \eqref{eq:ModDef}. Note that
\begin{equation} \label{eq:same}
C_{h,\alpha}[f_z] = C_{h,\alpha,H}[f_z] \mbox{ and } I^*_{h,\alpha}[f_z] = I^*_{h,\alpha,H}[f_z], \mbox{ for } y<H,
\end{equation}
and that $I^*_{h,\alpha,H}[f_z]=I_{h,\alpha}[f_z]$ for $y>H$. We note also that, as a function of $z$, $I^*_{h,\alpha}[f_z]$ is entire: $I_{h,\alpha}[f_z]$ and $C_{h,\alpha}[f_z]$ are both meromorphic with simple poles at $z=\pm (k-\alpha)h$, $k\in \Z$, and it is easy to check that the pole contributions cancel in the sum \eqref{eq:If*2}: the singularities in  $I^*_{h,\alpha}[f_z]$ are removable.

The following proposition (cf.~\cite[\S2]{Mo83}) bounds $|w(z)-I^{*}_{h,\alpha,H}[f_z]|$ for $H=\pi/h$. It also bounds the relative error
$|w(z)-I^{*}_{h,\alpha,H}[f_z]|/|w(z)|$ using the lower bound \cite[Theorem 6]{Mohd14}
\begin{equation}\label{wlb}
|w(z)|\geq\frac{1}{1+\sqrt{\pi}|z|},\quad\mathrm{Im}(z)\geq0,
\end{equation}
this being sharp for small and large $z$ as $w(0)=1$ and $w(z)\sim \ri/(\sqrt{\pi}z)$ as $z\to\infty$ \cite[(2.6)]{WG70}.

\begin{proposition} \label{prop:new1} Suppose that $z=x+\ri y$ with $0\leq x\leq y$. Then
\begin{equation} \label{eq:fnb5}
\left|w(z)-I^{*}_{h,\alpha}[f_z]\right| \leq 2\sqrt{\frac{\re}{\pi}}\,\frac{\re^{-\pi^2/h^2}}{1- \re^{-2\pi^2/h^2}}
\end{equation}
and
\begin{equation} \label{eq:fnb5a}
\frac{\left|w(z)-I^{*}_{h,\alpha}[f_z]\right|}{|w(z)|} \leq \frac{4\sqrt{2\pi\re}}{h}\,\frac{\re^{-\pi^2/h^2}}{1- \re^{-2\pi^2/h^2}},
\end{equation}
if  $0\leq y\leq \pi/h$, while
\begin{equation} \label{eq:fnb6}
\left|w(z)-I_{h,\alpha}[f_z]\right| \leq 4\sqrt{\frac{\re}{\pi}}\,\frac{\re^{-\pi^2/h^2}}{1- \re^{-2\pi^2/h^2+\sqrt{2}\pi/h}}
\end{equation}
and
\begin{equation} \label{eq:fnb6a}
\frac{\left|w(z)-I_{h,\alpha}[f_z]\right|}{|w(z)|} \leq \frac{4\sqrt{2\pi\re}(1+\sqrt{\pi})}{h}\,\frac{\re^{-\pi^2/h^2}}{1- \re^{-2\pi^2/h^2+\sqrt{2}\pi/h}},
\end{equation}
if $y\geq \pi/h$ and $h<\sqrt{2}\, \pi$.
\end{proposition}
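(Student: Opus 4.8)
The plan is to apply Proposition~\ref{prop:basic}, in the form of the bound \eqref{eq:mainbound2}, to $f=f_z$ with $F=F_z$ as in \eqref{w_f}, taking the contour height to be $H=\pi/h+\epsilon$ when $y\le\pi/h$ and $H=\pi/h-\epsilon$ when $y\ge\pi/h$, where $\epsilon:=1/\sqrt2$. The only singularities of $F_z$ are the simple poles at $\pm z$, lying at imaginary parts $\pm y$, so the exponent-minimising choice $H=\pi/h$ would place a pole of the integrand on $\Gamma_H$ exactly when $y=\pi/h$; the fixed shift $\pm\epsilon$ keeps $\Gamma_H$ (and $\Gamma_{-H}$) at distance at least $\epsilon$ from both poles in each regime, and affects the estimate only by a bounded factor because, for either sign, $H^2-2\pi H/h=\epsilon^2-\pi^2/h^2=1/2-\pi^2/h^2$, i.e.\ $\re^{H^2-2\pi H/h}=\sqrt{\re}\,\re^{-\pi^2/h^2}$. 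When $y\le\pi/h$ we have $y<H$, so $\pm z\in S_H$ and, by \eqref{eq:same}, $I^{*}_{h,\alpha,H}[f_z]=I^{*}_{h,\alpha}[f_z]$, while $1-\re^{-2\pi H/h}\ge 1-\re^{-2\pi^2/h^2}$; when $y\ge\pi/h$ we have $y>H$, so $f_z$ is analytic in $S_H$, $I^{*}_{h,\alpha,H}[f_z]=I_{h,\alpha}[f_z]$, and $1-\re^{-2\pi H/h}=1-\re^{-2\pi^2/h^2+\sqrt2\,\pi/h}$, this last application of Proposition~\ref{prop:basic} being legitimate precisely when $H>0$, equivalently $h<\sqrt2\,\pi$---which is the hypothesis of \eqref{eq:fnb6}--\eqref{eq:fnb6a}. (We assume throughout that $y>0$, so that \eqref{wint} and Proposition~\ref{prop:basic} apply.)

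It remains to bound $M:=\sup_{t\in\R}|F_z(t+\ri H)|$ in \eqref{eq:mainbound2}, and to use \eqref{wlb} for the relative estimates. For $M$: writing $\zeta=t+\ri H$ and bounding each factor of $F_z(\zeta)=\ri z/(\pi(z-\zeta)(z+\zeta))$ below by the modulus of its imaginary part, $|z-\zeta|\ge|y-H|$ and $|z+\zeta|\ge y+H$, gives the $t$-independent bound $|F_z(t+\ri H)|\le|z|/(\pi\,|H^2-y^2|)$, hence $M\le|z|/(\pi\,|H^2-y^2|)$. Plugging this, $\re^{H^2-2\pi H/h}=\sqrt{\re}\,\re^{-\pi^2/h^2}$, and the bounds on $1-\re^{-2\pi H/h}$ above into \eqref{eq:mainbound2}, and using $|w(z)|^{-1}\le 1+\sqrt\pi\,|z|$ for the relative estimates, I reduce the four claimed inequalities to the following elementary pointwise bounds, to hold over the relevant set of $z$: $|z|\le H^2-y^2$ for \eqref{eq:fnb5}; $(1+\sqrt\pi|z|)|z|\le(2\sqrt2\,\pi/h)(H^2-y^2)$ for \eqref{eq:fnb5a}; $|z|\le2(y^2-H^2)$ for \eqref{eq:fnb6}; and $(1+\sqrt\pi|z|)|z|\le(2\sqrt2\,\pi(1+\sqrt\pi)/h)(y^2-H^2)$ for \eqref{eq:fnb6a}.

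Each of these is checked by the same short optimisation over $z$. Since $0\le x\le y$ we have $|z|\le\sqrt2\,y$, so each left-hand side is bounded by its value at $x=y$, an explicit function of $y$; a direct differentiation shows this function is increasing on $[0,\pi/h]$ when $H=\pi/h+\epsilon$ and decreasing on $[\pi/h,\infty)$ when $H=\pi/h-\epsilon$, so its supremum is always attained at $y=\pi/h$, where $|H^2-y^2|=\epsilon(2\pi/h\pm\epsilon)$, the sign matching that in the definition of $H$. Putting $\epsilon=1/\sqrt2$ and clearing denominators, the four inequalities reduce respectively to $0\le1/2$, $\sqrt\pi\le2$, $h\le\sqrt2\,\pi$ and $h\le\sqrt2\,\pi$, the first two true outright and the last two being exactly the standing hypothesis $h<\sqrt2\,\pi$.

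The step I expect to cause most trouble is getting the constants to come out, particularly in the relative bounds \eqref{eq:fnb5a}--\eqref{eq:fnb6a}: one must avoid replacing $|H^2-y^2|=|H-y|\,(H+y)$ by the weaker $\epsilon(H+y)$, which loses a factor $\sqrt2$ and breaks these bounds once $h$ is not small, keeping instead the full quadratic $H^2-y^2$ and performing the two-variable optimisation above. It is this, together with the precise choice $\epsilon=1/\sqrt2$, that makes the right-hand sides of \eqref{eq:fnb5}--\eqref{eq:fnb6a} tight enough to reduce to the trivial inequalities $\sqrt\pi<2$ and $h<\sqrt2\,\pi$ at the extreme point $x=y=\pi/h$.
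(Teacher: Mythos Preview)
Your proof is correct and follows essentially the same route as the paper's: apply \eqref{eq:mainbound2} with $H=\pi/h\pm\epsilon$, $\epsilon=1/\sqrt2$, bound $M\le|z|/(\pi|H^2-y^2|)$ via $|z\mp\zeta|\ge|y\mp H|$, use $|z|\le\sqrt2\,y$ and the monotonicity in $y$ of $y/|H^2-y^2|$ and $(1+\sqrt{2\pi}\,y)y/|H^2-y^2|$ to reduce to the extremal case $y=\pi/h$, and check the resulting inequalities there. The paper organises this by first recording the intermediate bounds \eqref{eq:fnb1}--\eqref{eq:fnb4}, \eqref{eq:fnb7}--\eqref{eq:fnb8} and then specialising, whereas you reduce directly to the four pointwise inequalities; the content is identical. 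One small gap: you explicitly assume $y>0$ throughout, but the statement includes $y=0$ (hence $z=0$); the paper closes this by noting that both sides of \eqref{eq:fnb5}--\eqref{eq:fnb5a} depend continuously on $y$ on $[0,\pi/h]$ (since $I^{*}_{h,\alpha}[f_z]$ is entire in $z$ and $|w(z)|$ is bounded below by \eqref{wlb}), and you should add the same one-line remark.
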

\begin{proof} For $H>0$ and $y>0$ with $H\neq y$ we have the bound \eqref{eq:mainbound2} with
\begin{equation} \label{eq:Mform}
M= \sup_{t\in \R}|F_z(t+\ri H)| = \frac{|z|}{\pi}\,\sup_{t\in \R} \frac{1}{|z^2-(t+\ri H)^2|}.
\end{equation}
Since  $|z^2-(t+\ri H)^2|=|z-t-\ri H|\, |z+t+\ri H|\geq |y-H|\, |y+H|$, it follows that
$
M \leq |z|/(\pi|y^2-H^2|)
$,
so that
\begin{equation} \label{eq:fnb1}
\left|w(z)-I^{*}_{h,\alpha,H}[f_z]\right| \leq \frac{2|z|\,\re^{H^2-2\pi H/h}}{\sqrt{\pi}\left(1- \re^{-2\pi H/h}\right)|y^2-H^2|}\leq \frac{2\sqrt{2}\,y\,\re^{H^2-2\pi H/h}}{\sqrt{\pi}\left(1- \re^{-2\pi H/h}\right)|y^2-H^2|},
\end{equation}
since $0\leq x\leq y$ so that $|z|\leq \sqrt{2}y$. Further,
 using \eqref{wlb} and $0\leq x\leq y$,
\begin{equation} \label{eq:fnb2}
\frac{\left|w(z)-I^{*}_{h,\alpha,H}[f_z]\right|}{|w(z)|} \leq \frac{2\sqrt{2}(1+\sqrt{2\pi}y)y\,\re^{H^2-2\pi H/h}}{\sqrt{\pi}\left(1- \re^{-2\pi H/h}\right)|y^2-H^2|}.
\end{equation}

Now suppose that $0<y\leq \pi/h$ and take $H=\pi/h+\varepsilon$ for some $\varepsilon>0$. Then, by \eqref{eq:same}, $I^*_{h,\alpha}[f_z] = I^*_{h,\alpha,H}[f_z]$, and since $y/(H^2-y^2)$ and $(1+\sqrt{2\pi} y)y/(H^2-y^2)$ are increasing as functions of $y$ on $[0,H)$, it follows from \eqref{eq:fnb1} and \eqref{eq:fnb2} with $H=\pi/h+\varepsilon$ that
\begin{equation} \label{eq:fnb3}
\left|w(z)-I^{*}_{h,\alpha}[f_z]\right| \leq \frac{2\sqrt{2\pi}\,\re^{-\pi^2/h^2+\varepsilon^2}}{\left(1- \re^{-2\pi^2/h^2-2\pi\varepsilon/h}\right)\varepsilon(2\pi+\varepsilon h)}
\end{equation}
and
\begin{equation} \label{eq:fnb4}
\frac{\left|w(z)-I^{*}_{h,\alpha}[f_z]\right|}{|w(z)|} \leq \frac{2\sqrt{2\pi}\,(h+\sqrt{2\pi}\pi)\,\re^{-\pi^2/h^2+\varepsilon^2}}{h\left(1- \re^{-2\pi^2/h^2-2\pi\varepsilon/h}\right)\varepsilon(2\pi+\varepsilon h)}.
\end{equation}
Choosing $\varepsilon = 1/\sqrt{2}$ to minimise $\exp(\varepsilon^2)/\varepsilon$ we obtain, for $0<y<\pi/h$, the bound \eqref{eq:fnb5}, and also the bound \eqref{eq:fnb5a} on noting that $(h+\sqrt{2\pi}\, \pi)/(2\pi+h/\sqrt{2})\leq \sqrt{2}$; these bounds hold also for $y=0$ and $y=\pi/h$ since the left hand sides of the bounds depend continuously on $y$ on $[0,\pi/h]$ (recall that $I^{*}_{h,\alpha}[f_z]$ is an entire function of $z$ and that $w(z)$ is bounded below on $y\geq 0$ by \eqref{wlb}).

Now suppose that $y>\pi/h$ and take $H=\pi/h-\varepsilon$ for some $\varepsilon\in (0,\pi/h)$. Then $I^*_{h,\alpha,H}[f_z]=I_{h,\alpha}[f_z]$, and since $y/(y^2-H^2)$ and $(1+\sqrt{2\pi} y)y/(y^2-H^2)$ are decreasing as functions of $y$ on $(H,\infty]$, it follows from \eqref{eq:fnb1} and \eqref{eq:fnb2} with $H=\pi/h-\varepsilon$ that
\begin{equation} \label{eq:fnb7}
\left|w(z)-I_{h,\alpha}[f_z]\right| \leq \frac{2\sqrt{2\pi}\,\re^{-\pi^2/h^2+\varepsilon^2}}{\left(1- \re^{-2\pi^2/h^2+2\pi\varepsilon/h}\right)\varepsilon(2\pi-\varepsilon h)}
\end{equation}
and
\begin{equation} \label{eq:fnb8}
\frac{\left|w(z)-I_{h,\alpha}[f_z]\right|}{|w(z)|} \leq \frac{2\sqrt{2\pi}\,(h+\sqrt{2\pi}\pi)\,\re^{-\pi^2/h^2+\varepsilon^2}}{h\left(1- \re^{-2\pi^2/h^2+2\pi\varepsilon/h}\right)\varepsilon(2\pi-\varepsilon h)}.
\end{equation}
If $\pi/h>1/\sqrt{2}$ we can again choose $\varepsilon = 1/\sqrt{2}$, obtaining the bounds \eqref{eq:fnb6} and \eqref{eq:fnb6a} for $y>\pi/h$; these bounds hold also for $y=\pi/h$ since the left hand sides of the bounds depend continuously on $y$ on $[\pi/h,\infty)$.
\end{proof}

It follows immediately from the definition \eqref{eq:Cf2} that, for $x\in \R$, $y>0$,
\begin{eqnarray}\label{w_Ch}
\left|C_{h,\alpha}[f_z]\right|\leq\frac{2\,\re^{-2\pi y/h}}{1- \re^{-2\pi y/h}}\,\re^{y^2-x^2}.
\end{eqnarray}
Since $|I^*_{h,\alpha}[f_z]|\leq |I_{h,\alpha}[f_z]| + |C_{h,\alpha}[f_z]|$, the following corollary follows from the above proposition, \eqref{w_Ch}, and \eqref{wlb}. The purpose of this corollary is to provide bounds on errors on part of the boundary of the infinite sector $\Omega$ (as defined in Lemma \ref{MMP} with $a=4$). This will lead to absolute and relative error bounds for $I^*_{h,\alpha}[f_z]$ as an approximation on the whole of $\overline \Omega$ via the Phragm\'en--Lindel\"of principle of Lemma \ref{MMP} -- see Proposition \ref{prop:new3} below.

\begin{corollary} \label{cor:xybound}
If $z=x+\ri y$ with $x=y\geq 0$ and $h<\sqrt{2}\, \pi$, then
\begin{equation} \label{eq:fnb10}
\left|w(z)-I^*_{h,\alpha}[f_z]\right| \leq c_a\, \frac{\re^{-\pi^2/h^2}}{1- \re^{-2\pi^2/h^2+\sqrt{2}\pi/h}}
\end{equation}
and
\begin{equation} \label{eq:fnb11}
 \frac{\left|w(z)-I^*_{h,\alpha}[f_z]\right|}{|w(z)|} \leq \frac{c_r}{h}\,\frac{\re^{-\pi^2/h^2}}{1- \re^{-2\pi^2/h^2+\sqrt{2}\pi/h}},
\end{equation}
where
\begin{equation} \label{eq:cardef}
c_a := \frac{2(2\re+\sqrt{\pi})}{\sqrt{\re\pi}}\approx 4.934 \quad \mbox{and} \quad c_r:= \frac{2\sqrt{2\pi}\,(1+\sqrt{\pi})(2\re+\sqrt{\pi})}{\sqrt{\re}}\approx 60.77.
\end{equation}
\end{corollary}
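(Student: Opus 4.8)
The strategy is to add the bound on $|C_{h,\alpha}[f_z]|$ coming from \eqref{w_Ch} to the bound on $|w(z)-I_{h,\alpha}[f_z]|$ provided by the second half of Proposition \ref{prop:new1}, since on the ray $x=y$ the relation $I^*_{h,\alpha}[f_z]=I_{h,\alpha}[f_z]+C_{h,\alpha}[f_z]$ from \eqref{eq:If*2} gives
\[
|w(z)-I^*_{h,\alpha}[f_z]| \;\leq\; |w(z)-I_{h,\alpha}[f_z]| \;+\; |C_{h,\alpha}[f_z]|.
\]
On the diagonal $x=y\geq 0$ we always have $y\geq \pi/h$ only in part of the ray, so strictly we should split according to whether $y< \pi/h$ (use \eqref{eq:fnb5}) or $y\geq \pi/h$ (use \eqref{eq:fnb6}); but the bound \eqref{eq:fnb6} is larger than \eqref{eq:fnb5} (its denominator is smaller because $\sqrt2\pi/h>0$), so it is legitimate to use \eqref{eq:fnb6} for the whole ray. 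For the $C$-term, setting $x=y$ in \eqref{w_Ch} makes the factor $\re^{y^2-x^2}=1$, leaving $|C_{h,\alpha}[f_z]|\leq 2\re^{-2\pi y/h}/(1-\re^{-2\pi y/h})$, which is decreasing in $y$; its supremum over $y\geq 0$ is attained in the limit $y\to 0^+$ only if unbounded, so in fact we must be slightly more careful — the quantity blows up as $y\to 0$, so we cannot bound it uniformly this way.

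The resolution is that near $y=0$ the $C$-term is cancelled by a pole of $I_{h,\alpha}[f_z]$ (recall the remark after \eqref{eq:same} that $I^*_{h,\alpha}[f_z]$ is entire), so instead one bounds $|C_{h,\alpha}[f_z]|$ only where it is genuinely needed. The clean way, which I expect the authors take, is: for $y\geq \pi/h$ use \eqref{eq:fnb6} directly (no $C$-term appears, since $I^*_{h,\alpha,H}=I_{h,\alpha}$ with $H=\pi/h-\varepsilon<y$, and this equals $I^*_{h,\alpha}$ by \eqref{eq:same}); for $0\leq y\leq \pi/h$ use \eqref{eq:fnb5}, which already bounds $|w(z)-I^*_{h,\alpha}[f_z]|$ with the full modified rule and needs no separate $C$-estimate. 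One then checks that \eqref{eq:fnb5}'s right-hand side is dominated by \eqref{eq:fnb6}'s, so the single expression \eqref{eq:fnb10} with the denominator $1-\re^{-2\pi^2/h^2+\sqrt2\pi/h}$ covers both ranges, and the constant is $c_a = 4\sqrt{\re/\pi}$ from \eqref{eq:fnb6} — except the stated $c_a\approx 4.934$ is slightly larger, $2(2\re+\sqrt\pi)/\sqrt{\re\pi}$, which strongly suggests the authors do in fact split off a $C$-contribution, bounding it where it is small (large $y$) by the $2\re^{-2\pi y/h}$ estimate evaluated at $y=\pi/h$, i.e. by $2\re^{-2\pi^2/h^2}/(1-\re^{-2\pi^2/h^2})$, and folding this into the denominator. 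Thus the likely route is: use \eqref{eq:fnb5} on $0\leq y\leq \pi/h$ to get the term $2\sqrt{\re/\pi}\,\re^{-\pi^2/h^2}/(1-\re^{-2\pi^2/h^2})$; on $y> \pi/h$ write $|w(z)-I^*_{h,\alpha}[f_z]|\leq |w(z)-I_{h,\alpha}[f_z]|+|C_{h,\alpha}[f_z]|$ and bound the first summand by \eqref{eq:fnb6} and the second, using \eqref{w_Ch} with $x=y$ and monotonicity, by its value at $y=\pi/h$, namely $2\re^{-2\pi^2/h^2}/(1-\re^{-2\pi^2/h^2})$. Combining and bounding $1/(1-\re^{-2\pi^2/h^2})\leq 1/(1-\re^{-2\pi^2/h^2+\sqrt2\pi/h})$ throughout, one collects the numerator constant $4\sqrt{\re/\pi}+2/\re = 2(2\re+\sqrt\pi)/(\sqrt{\re}\cdot?)$ — here a short arithmetic check reconciles it with $c_a$ in \eqref{eq:cardef} (note $4\sqrt{\re/\pi}=4\sqrt\re/\sqrt\pi$ and $2\re^{-1}\cdot$ appropriate factors combine to give $2(2\re+\sqrt\pi)/\sqrt{\re\pi}$ after writing both terms over the common form $\cdot/\sqrt{\re\pi}$).

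For the relative bound \eqref{eq:fnb11}, divide through by $|w(z)|$ and invoke \eqref{wlb}, i.e. $|w(z)|\geq 1/(1+\sqrt\pi|z|) = 1/(1+\sqrt{2\pi}\,y)$ on $x=y$. On $0\leq y\leq\pi/h$ use \eqref{eq:fnb5a} directly (already a relative bound, with $\sqrt{2\pi}y\leq\sqrt{2\pi}\,\pi/h$ absorbed into the $1/h$ prefactor). On $y>\pi/h$ multiply the absolute bound by $(1+\sqrt{2\pi}\,y)$; the $C$-term contributes $(1+\sqrt{2\pi}y)\cdot 2\re^{-2\pi y/h}/(1-\re^{-2\pi y/h})$, and here the key point is that $(1+\sqrt{2\pi}y)\re^{-2\pi y/h}$ is \emph{decreasing} in $y$ for $y\geq\pi/h$ once $h<\sqrt2\,\pi$ (its logarithmic derivative is $\sqrt{2\pi}/(1+\sqrt{2\pi}y)-2\pi/h<\sqrt{2\pi}/(1+\sqrt2\pi)-2\pi/h<0$ for $h<\sqrt2\pi$), so it is again bounded by its value at $y=\pi/h$. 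Collecting the $1/h$ factors (the $(h+\sqrt{2\pi}\pi)/h$ from \eqref{eq:fnb6a} and the $(1+\sqrt{2\pi}\pi/h)$-type factors) and the denominator bound gives \eqref{eq:fnb11} with $c_r$ as in \eqref{eq:cardef}, after an arithmetic check that the assembled constant equals $2\sqrt{2\pi}(1+\sqrt\pi)(2\re+\sqrt\pi)/\sqrt\re\approx 60.77$.

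The only real obstacle is bookkeeping: one must take care that the $C_{h,\alpha}[f_z]$ contribution is estimated only in the regime $y\geq\pi/h$ where it is small (it is genuinely large, indeed unbounded, as $y\to 0$, but there it is cancelled inside the entire function $I^*_{h,\alpha}[f_z]$, which is why \eqref{eq:fnb5} must be used, not a triangle-inequality split, for small $y$); and one must verify the two monotonicity claims — that $\re^{-2\pi y/h}/(1-\re^{-2\pi y/h})$ and $(1+\sqrt{2\pi}y)\re^{-2\pi y/h}/(1-\re^{-2\pi y/h})$ decrease on $y\geq\pi/h$ when $h<\sqrt2\,\pi$ — and then reconcile the resulting numerator constants with \eqref{eq:cardef} by a direct, short computation. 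Continuity of both sides in $y$ at the junction $y=\pi/h$ (using again that $I^*_{h,\alpha}[f_z]$ is entire and $w(z)\geq\eqref{wlb}>0$) lets us patch the two ranges into the single stated inequality on all of $x=y\geq0$.
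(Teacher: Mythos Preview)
Your proposal is correct and follows essentially the same route as the paper: split the ray $x=y\geq 0$ at $y=\pi/h$; for $0\leq y\leq \pi/h$ invoke \eqref{eq:fnb5}--\eqref{eq:fnb5a} directly (no triangle-inequality split, since $I^*_{h,\alpha}[f_z]$ is entire and the $C$-term cannot be separated there); for $y\geq\pi/h$ use $|w(z)-I^*_{h,\alpha}[f_z]|\leq |w(z)-I_{h,\alpha}[f_z]|+|C_{h,\alpha}[f_z]|$, bounding the first summand by \eqref{eq:fnb6}--\eqref{eq:fnb6a} and the second by \eqref{w_Ch} with $x=y$, evaluated at $y=\pi/h$ via the monotonicity of $1/(\re^{2\pi y/h}-1)$ and of $(1+\sqrt{2\pi}y)/(\re^{2\pi y/h}-1)$ on $[\pi/h,\infty)$. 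One small arithmetic slip: the $C$-term contributes $2/\sqrt{\re}$, not $2/\re$, to the numerator constant (since $\re^{-2\pi^2/h^2}=\re^{-\pi^2/h^2}\cdot\re^{-\pi^2/h^2}\leq \re^{-\pi^2/h^2}/\sqrt{\re}$ when $h<\sqrt{2}\,\pi$), and then $4\sqrt{\re/\pi}+2/\sqrt{\re}=(4\re+2\sqrt{\pi})/\sqrt{\re\pi}=c_a$ as claimed.
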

\begin{proof} For $0\leq x=y\leq \pi/h$ these bounds follow immediately from the sharper bounds \eqref{eq:fnb5} and \eqref{eq:fnb5a}.  Suppose now that $x=y\geq \pi/h$ and $h<\sqrt{2}\, \pi$. Then  it follows from \eqref{w_Ch} that
$$
\left|C_{h,\alpha}[f_z]\right|\leq\frac{2\,\re^{-2\pi^2/h^2}}{1- \re^{-2\pi^2/h^2}} \leq \frac{2\,\re^{-\pi^2/h^2}}{\sqrt{\re}\left(1- \re^{-2\pi^2/h^2}\right)}.
$$
Further, since $(1+\sqrt{2\pi} y)/(\re^{2\pi y/h}-1)$ is decreasing as a function of $y$ on $[\pi/h,\infty)$, it follows from \eqref{w_Ch} and \eqref{wlb} that
$$
\frac{\left|C_{h,\alpha}[f_z]\right|}{|w(z)|}\leq\frac{2\,(h+\sqrt{2\pi} \pi)\re^{-2\pi^2/h^2}}{h\left(1- \re^{-2\pi^2/h^2}\right)} \leq \frac{2\sqrt{2}\, \pi(1+\sqrt{\pi})\,\re^{-\pi^2/h^2}}{h\sqrt{\re}\left(1- \re^{-2\pi^2/h^2}\right)}.
$$
Since $|I^*_{h,\alpha}[f_z]|\leq |I_{h,\alpha}[f_z]| + |C_{h,\alpha}[f_z]|$, the required bounds for  $x=y\geq \pi/h$ follow from the above bounds and  \eqref{eq:fnb6} and \eqref{eq:fnb6a} in Proposition \ref{prop:new1}.
\end{proof}

Proposition \ref{prop:new1} tells us that $I^*_{h,\alpha,H}[f_z]$, with $H=\pi/h$, is an approximation to $w(z)$ with   controllable absolute and relative errors for $\pi/4\leq \mathrm{arg}(z)\leq \pi/2$. To complement this result we will show in Proposition \ref{prop:new3} below that $I^*_{h,\alpha}[f_z]$ is an approximation to $w(z)$ with   controllable absolute and relative errors for $0\leq \mathrm{arg}(z)\leq \pi/4$, so that $I^*_{h,\alpha,\pi/h}[f_z]$ and $I^*_{h,\alpha}[f_z]$ together provide trapezoidal rule-based approximations to $w(z)$ across the whole first-quadrant (and, via the symmetries \eqref{symm}, across the whole complex plane). We will prove Proposition \ref{prop:new3} via the following Phragm\'en--Lindel\"of principle applied with $a=4$ to the left hand sides of \eqref{eq:fnb7} and \eqref{eq:fnb8}.
\begin{lemma}{\cite[Chapter VI, Cor.~4.2]{Con78}}\label{MMP}
Let $a\geq 1/2$ and put
$$
\Omega:=\left\{z=r\re^{\ri\theta}:r>0 \mbox{ and } 0<\theta <\pi/a\right\}.
$$
Suppose that $f$ is analytic on $\Omega$ and continuous in $\overline{\Omega}$ and that there is a constant $P$ such that $|f(z)|\leq P$ for all $z\in\partial \Omega$. If there are positive constants $Q$ and $b<a$ such that
$\left|f(z)\right|\leq Q\,\exp(|z|^b)
$ for all $z\in \Omega$, then $|f(z)|\leq P$ for all $z\in\overline{\Omega}$.
\end{lemma}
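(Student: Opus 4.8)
The plan is to prove this classical Phragm\'en--Lindel\"of bound by the standard device of damping $f$ with a slowly growing power-type factor and then invoking the ordinary maximum modulus principle on bounded truncations of the sector.

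First I would reduce to the case of a sector bisected by the positive real axis. Writing $\beta:=\pi/a\in(0,2\pi]$ for the opening angle (here the hypothesis $a\geq 1/2$ is precisely what guarantees $\beta\leq 2\pi$, so that $\Omega$ is a genuine sub-domain of $\C$) and $\omega:=\re^{\ri\beta/2}$, the function $g(z):=f(\omega z)$ is analytic on $\Omega':=\{r\re^{\ri\theta}:r>0,\ |\theta|<\beta/2\}$, continuous on $\overline{\Omega'}$, satisfies $|g|\leq P$ on $\partial\Omega'$, and obeys $|g(z)|\leq Q\exp(|z|^b)$ on $\Omega'$ since $|\omega z|=|z|$; it therefore suffices to show $|g|\leq P$ on $\overline{\Omega'}$.

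Next, fix any exponent $c$ with $b<c<a$; then $c\beta/2<\pi/2$, so $\delta:=\cos(c\beta/2)>0$ and $\cos(c\theta)\geq\delta$ for all $|\theta|\leq\beta/2$. For $\epsilon>0$ set $G_\epsilon(z):=g(z)\,\re^{-\epsilon z^c}$, using on $\overline{\Omega'}$ the branch $z^c:=r^c\re^{\ri c\theta}$, which is analytic on $\Omega'$ and continuous on $\overline{\Omega'}$ (including at $0$, as $c>0$). Then $|G_\epsilon(z)|=|g(z)|\exp(-\epsilon r^c\cos(c\theta))\leq |g(z)|\exp(-\epsilon\delta r^c)$, so $|G_\epsilon|\leq P$ on $\partial\Omega'$, while on $\Omega'$ we have $|G_\epsilon(z)|\leq Q\exp(r^b-\epsilon\delta r^c)\to 0$ as $r=|z|\to\infty$, because $c>b$. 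Applying the maximum modulus principle to $G_\epsilon$ on the bounded region $\Omega'\cap\{|z|<R\}$ --- whose boundary consists of the two radial segments, where $|G_\epsilon|\leq P$, and the circular arc $|z|=R$, where $|G_\epsilon|\leq Q\exp(R^b-\epsilon\delta R^c)\leq P$ once $R$ is large enough --- gives $|G_\epsilon|\leq P$ there, and hence, letting $R\to\infty$, on all of $\Omega'$. Undoing the damping, $|g(z)|\leq P\exp(\epsilon r^c\cos(c\theta))\leq P\exp(\epsilon r^c)$ on $\Omega'$; letting $\epsilon\to 0^+$ yields $|g|\leq P$ on $\Omega'$, and then on $\overline{\Omega'}$ by continuity, which gives $|f|\leq P$ on $\overline{\Omega}$.

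The one genuinely delicate point is the choice of the damping exponent $c$: it must lie strictly between $b$ and $a$ so that simultaneously the factor $\re^{-\epsilon z^c}$ actually decays inside the sector (which needs $c>b$) and the growth direction of $z^c$ stays within the open right half-plane across the whole sector, i.e.\ $c\beta/2<\pi/2$ (which needs $c<a$); this is exactly where the standing hypothesis $b<a$ is consumed, and where the normalisation $a\geq 1/2$ is needed to make the rotation meaningful. Everything else --- the rotation, the maximum modulus principle on the truncated sectors, and the two successive limits $R\to\infty$ and then $\epsilon\to 0^+$ --- is routine.
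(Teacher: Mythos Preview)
Your proof is correct and is the standard Phragm\'en--Lindel\"of argument (rotate to a symmetric sector, damp by $\re^{-\epsilon z^{c}}$ with $b<c<a$, apply the maximum modulus principle on truncated sectors, then pass to the limits $R\to\infty$ and $\epsilon\to 0^{+}$). Note, however, that the paper does not supply its own proof of this lemma at all: it simply quotes the result from Conway, \emph{Functions of One Complex Variable I}, Chapter~VI, Corollary~4.2, and uses it as a black box. So there is nothing in the paper to compare your argument against; you have effectively written out the proof that Conway gives, which is exactly what the citation points to.

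One very minor remark: in the borderline case $a=1/2$ (so $\beta=2\pi$) the rotated sector $\Omega'$ is the slit plane $\C\setminus(-\infty,0]$, and the branch $z^{c}=r^{c}\re^{\ri c\theta}$ does not extend continuously across the negative real axis unless $c$ is an integer. Your appeal to continuity of $G_{\epsilon}$ on $\overline{\Omega'}$ therefore needs a word of care in that extreme case (one can instead work directly in the original sector, or treat the two edges of the slit separately). This does not affect the applications in the paper, where the lemma is invoked with $a=4$.
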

The main step in proving Proposition \ref{prop:new3} via this lemma is to show that the left hand sides of \eqref{eq:fnb10} and \eqref{eq:fnb11} are bounded on $\partial \Omega$ when $a=4$. We have bounded these left hand sides already on $\{r\re^{\ri\pi/4}:r\geq 0\}$ in Corollary \ref{cor:xybound}. It remains to bound them on the positive real axis which we do in the next proposition.

\begin{proposition} \label{prop:new2}
If $x\geq 0$ then
\begin{equation} \label{eq:fnbx1}
\left|w(x)-I^*_{h,\alpha}[f_x]\right| \leq \frac{2h\re^{-\pi^2/h^2}}{\pi^{3/2}\left(1- \re^{-2\pi^2/h^2}\right)}
\end{equation}
and
\begin{equation} \label{eq:fnbx2}
 \frac{\left|w(x)-I^*_{h,\alpha}[f_x]\right|}{|w(x)|} \leq \left[8+\frac{10h}{\pi^{3/2}}\right] \,  \frac{\re^{-\pi^2/h^2}}{1- \re^{-2\pi^2/h^2}}.
\end{equation}
\end{proposition}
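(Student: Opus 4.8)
The plan is to derive both bounds as the boundary limit, as $y\downarrow 0$, of the half-plane estimate \eqref{eq:mainbound2}. One cannot invoke Proposition \ref{prop:basic} directly at $z=x\in\R$: the identity $w(z)=I[f_z]$ holds only for $\mathrm{Im}(z)>0$, and the poles $\pm x$ of $f_x$ then lie on the real axis, violating the hypothesis $\eta_k\neq0$. So I would fix $x\geq0$ and, for $0<y<\pi/h$, set $z=x+\ri y$. By \eqref{eq:same}, $I^*_{h,\alpha}[f_z]=I^*_{h,\alpha,\pi/h}[f_z]$, and since $F_z$ (see \eqref{w_f}) is even and analytic in $S_{\pi/h}$ away from its poles $\pm z$, which sit strictly inside as $y<\pi/h$, applying \eqref{eq:mainbound2} with $H=\pi/h$ gives
\[
\left|w(z)-I^*_{h,\alpha}[f_z]\right|\leq \frac{2\,\re^{-\pi^2/h^2}}{1-\re^{-2\pi^2/h^2}}\int_{-\infty}^\infty \re^{-t^2}\left|F_z(t+\ri\pi/h)\right|\,dt \leq \frac{2\sqrt{\pi}\,M(y)\,\re^{-\pi^2/h^2}}{1-\re^{-2\pi^2/h^2}},
\]
where $M(y):=\sup_{t\in\R}|F_z(t+\ri\pi/h)|$. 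Because $w$ is entire and (as noted in \S\ref{w_approx}) $z\mapsto I^*_{h,\alpha}[f_z]$ is entire, the left-hand side is continuous in $y$ on $[0,\pi/h)$; and both right-hand sides are continuous in $y$ at $0$, since $|z^2-(t+\ri\pi/h)^2|$ stays bounded away from $0$ and the integrand/supremand decays uniformly in $t$ as $y\to0$. Hence, letting $y\downarrow0$, both inequalities hold with $z$ replaced by $x$.

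For the absolute bound \eqref{eq:fnbx1} I would use the rightmost inequality at $y=0$. Writing $|x^2-(t+\ri\pi/h)^2|^2=(x^2+\pi^2/h^2-t^2)^2+4t^2\pi^2/h^2$, a quadratic in $t^2$, an elementary minimisation shows $\min_{t\in\R}|x^2-(t+\ri\pi/h)^2|$ equals $2\pi x/h$ when $x\geq\pi/h$ and $x^2+\pi^2/h^2$ when $x\leq\pi/h$; in both regimes the minimum is at least a fixed constant times $x$ (and at least $\pi^2/h^2$), so a one-line estimate gives the uniform bound $M(0)\leq h/\pi^2$, and substituting yields \eqref{eq:fnbx1}.

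For the relative bound \eqref{eq:fnbx2} that estimate of $M(0)$ does not suffice, because by \eqref{wlb} the factor $1/|w(x)|\leq1+\sqrt{\pi}\,x$ may grow linearly in $x$, so the error itself must be shown to decay like $x^{-1}$ for large $x$; for this I would keep the integral form. Splitting the $t$-range according as $t^2$ is at most or exceeds $(x^2+\pi^2/h^2)/2$: in the first range $|x^2-(t+\ri\pi/h)^2|\geq(x^2+\pi^2/h^2)/2$, so $(1+\sqrt{\pi}\,x)|F_x(t+\ri\pi/h)|$ is bounded uniformly in $x$ (estimating $x^2/(x^2+\pi^2/h^2)\leq1$ and $x/(x^2+\pi^2/h^2)\leq h/(2\pi)$); in the second range $t^2>x^2/2$, so the Gaussian factor $\re^{-t^2}$ (combined with $|x^2-(t+\ri\pi/h)^2|\geq2|t|\pi/h$) keeps the contribution bounded and decaying in $x$. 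Multiplying the integral bound by $1+\sqrt{\pi}\,x$ and collecting the numerical factors then yields \eqref{eq:fnbx2}.

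The step I expect to be the main obstacle is the passage $y\downarrow0$: one must ensure that neither side of the half-plane inequality degenerates on the real axis. That is exactly why I would rely on the entirety of $z\mapsto I^*_{h,\alpha}[f_z]$ (so the approximation stays finite and continuous even where $x$ coincides with a quadrature node $(k-\alpha)h$) and on the strict positivity \eqref{wlb} of $|w(x)|$ (so the relative error stays well defined). Everything else — the $t^2$-minimisation for \eqref{eq:fnbx1}, the $t$-split for \eqref{eq:fnbx2}, and the tracking of constants — is routine.
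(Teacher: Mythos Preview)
Your proposal is correct and follows essentially the same route as the paper: apply \eqref{eq:mainbound2} with $H=\pi/h$ for $0<y<\pi/h$, pass to $y=0$ by continuity (using that $w$ and $z\mapsto I^*_{h,\alpha}[f_z]$ are entire, together with \eqref{wlb}), bound $M(0)\leq h/\pi^2$ for \eqref{eq:fnbx1}, and for \eqref{eq:fnbx2} keep the integral form, split the $t$-range, and multiply through by $1+\sqrt{\pi}\,x$. The only notable difference is that the paper obtains $M(0)\leq h/\pi^2$ via the factorisation $|x^2-(t+\ri\pi/h)^2|=|x-|t|-\ri\pi/h|\,|x+|t|+\ri\pi/h|\geq(\pi/h)|x+\ri\pi/h|$ rather than your quadratic minimisation, and for the relative bound splits the integral at $|t|=x/2$ (not at $t^2=(x^2+\pi^2/h^2)/2$), then combines two tail estimates via $\min(a,b)\leq 2ab/(a+b)$ to land exactly on the constant $8+10h/\pi^{3/2}$; your split is valid but would require more careful bookkeeping to hit that specific constant.
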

\begin{proof}
Arguing as in the proof of Proposition \ref{prop:new1}, for $x\geq 0$ and $0<y<\pi/h$ we have the bound \eqref{eq:mainbound2} with $H=\pi/h$. Thus, and since (recall \eqref{eq:same}) $I^*_{h,\alpha,H}[f_z]=I^*_{h,\alpha}[f_z]$ for $y<H$, it follows that
\begin{equation} \label{eq:xfirst}
\left|I[f]-I^*_{h,\alpha}[f_z]\right|\leq  \frac{2\re^{-\pi^2/h^2}}{1- \re^{-2\pi^2/h^2}}\, \int_{-\infty}^\infty \re^{-t^2} |F_z(t+\ri H)|\, dt \leq \frac{2\sqrt{\pi}M\re^{-\pi^2/h^2}}{1- \re^{-2\pi^2/h^2}},
\end{equation}
where $M$ is given by \eqref{eq:Mform}. Since $M$ and the left and right hand sides of the first of these inequalities depend continuously on $y$ on $[0,\pi/h)$, the above inequalities hold also for $y=0$. Since also, for $x\geq 0$ and $t\in \R$ we have that
\begin{eqnarray} \nonumber
|x^2-(t+\ri\pi/h)^2| &= &|x^2-(|t|+\ri\pi/h)^2|\\ \label{eq:xtbound}
&=&|x-|t|-\ri \pi/h|\,|x+|t|+\ri \pi/h|\geq\frac{\pi}{h}|x+\ri\pi/h| \geq \frac{\pi}{h}|x|,
\end{eqnarray}
it follows that $M\leq h/\pi^2$ for $x\geq 0$, and \eqref{eq:fnbx1} follows from \eqref{eq:xfirst} with $z=x\geq0$.

From \eqref{wlb} and \eqref{eq:xfirst}, with $z=x\geq 0$ and $H=\pi/h$, it follows that
\begin{equation} \label{eq:xsecond}
\frac{\left|I[f]-I^*_{h,\alpha}[f_x]\right|}{|w(x)|}\leq  \frac{2(1+\sqrt{\pi}\, x)\re^{-\pi^2/h^2}}{1- \re^{-2\pi^2/h^2}}\, \int_{-\infty}^\infty G_x(t)\, dt
\end{equation}
for $x\geq 0$, where
$$
G_x(t) := \re^{-t^2} |F_x(t+\ri \pi/h)| = \frac{x\re^{-t^2}}{\pi\,|x^2-(t+\ri\pi/h)^2|}.
$$
If $x\geq 0$ and $-x/2\leq t\leq x/2$, then
$$
|x^2-(t+\ri\pi/h)^2| = |x-t-\ri \pi/h|\,|x+t+\ri \pi/h| \geq |x/2-\ri \pi/h|\,|x/2+\ri \pi/h|= \frac{x^2+4\pi^2/h^2}{4},
$$
so that
$$
\int_{-x/2}^{x/2}G_x(t)\, dt  \leq \frac{4x}{\pi(x^2+4\pi^2/h^2)}\int_{-\infty}^\infty \re^{-t^2}\, dt =  \frac{4x}{\sqrt{\pi}(x^2+4\pi^2/h^2)}.
$$
But also, using \eqref{eq:xtbound}, we see that
$
G_x(t) \leq h\re^{-t^2}/\pi^2$,
for all $x\geq 0$, $t\in\R$, so that
$$
\int_{\R\setminus[-x/2,x/2]} G_x(t)\, dt \leq \frac{2h}{\pi^2}\int_{x/2}^\infty \re^{-t^2}\, dt  <\frac{2h}{\pi^2x}\re^{-x^2/4} \leq \frac{2h}{\pi^2x},
$$
for $x>0$ since, for $a>0$,
\begin{equation}\label{Integ_prop}
\int_a^\infty \re^{-t^2}\, dt = \frac{\re^{-a^2}}{2a}-\frac{1}{2}\int_a^\infty \frac{\re^{-t^2}}{t^2}d t< \frac{\re^{-a^2}}{2a}.
\end{equation}
Moreover,
$$
\int_{\R\setminus[-x/2,x/2]} G_x(t)\, dt \leq \frac{2h}{\pi^2}\int_{0}^\infty \re^{-t^2}\, dt  = \frac{h}{\pi^{3/2}}.
$$
Thus, and since $\min(a,b)\leq 2ab/(a+b)$ if $a\geq 0$, $b\geq 0$, and $a+b>0$, it follows that
$$
\int_{\R\setminus[-x/2,x/2]} G_x(t)\, dt \leq \frac{h}{\pi^{2}}\, \min(\sqrt{\pi},2x^{-1}) \leq \frac{4h}{\pi^{3/2}(2+\sqrt{\pi}\,x)}
$$
so that
$$
\int_{-\infty}^{\infty}G_x(t)\, dt  \leq  \frac{4}{\sqrt{\pi}}\left(\frac{x}{x^2+4\pi^2/h^2}+\frac{h}{\pi(2+\sqrt{\pi}\,x)}\right)
$$
and \eqref{eq:fnbx2} follows from \eqref{eq:xsecond}, on noting that  $x^2/(x^2+4\pi^2/h^2)\leq 1$ and $x/(x^2+4\pi^2/h^2)\leq h/(4\pi)$.
\end{proof}

In our final proposition of this subsection  (cf.~\cite[Proposition 3.3.4]{Mohd17}) we combine Corollary \ref{cor:xybound}, Proposition \ref{prop:new2}, and Lemma \ref{MMP} to bound approximations to $w(z)$ in $0\leq \arg(z)\leq \pi/4$, complementing the bounds in Proposition \ref{prop:new1} for $\pi/4\leq \arg(z)\leq \pi/2$.

\begin{proposition} \label{prop:new3}
Suppose that $h<\sqrt{2}\, \pi$ and $z=x+\ri y$ with $0\leq y\leq x$. Then the bounds \eqref{eq:fnb10} and \eqref{eq:fnb11} hold with $c_a$ and $c_r$ given by \eqref{eq:cardef}.
\end{proposition}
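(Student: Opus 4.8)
The plan is to apply the Phragmén–Lindelöf principle of Lemma \ref{MMP} with $a=4$ to the function
\[
\Phi(z) := \big(w(z) - I^*_{h,\alpha}[f_z]\big)\,\frac{1- \re^{-2\pi^2/h^2+\sqrt{2}\pi/h}}{\re^{-\pi^2/h^2}},
\]
and a companion function for the relative-error bound. First I would check that $\Phi$ is analytic on $\Omega = \{r\re^{\ri\theta}: r>0,\ 0<\theta<\pi/4\}$ and continuous on $\overline\Omega$: the term $w(z)$ is entire, and $I^*_{h,\alpha}[f_z]$ was observed earlier in the excerpt (just after \eqref{eq:same}) to be an entire function of $z$, since the poles of $I_{h,\alpha}[f_z]$ and $C_{h,\alpha}[f_z]$ cancel. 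Hence $\Phi$ is entire, in particular analytic on $\Omega$ and continuous on $\overline\Omega$.

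Next I would establish the boundary bound $|\Phi(z)|\leq c_a$ on $\partial\Omega$. The boundary consists of two rays: the ray $\arg(z)=\pi/4$, i.e. $\{r\re^{\ri\pi/4}:r\geq 0\}$, which is exactly the set $x=y\geq 0$ covered by Corollary \ref{cor:xybound}, giving $|\Phi(z)|\leq c_a$ there; and the positive real axis $\{x\geq 0\}$, covered by Proposition \ref{prop:new2}, whose bound \eqref{eq:fnbx1} gives
\[
|\Phi(x)| \leq \frac{2h\re^{-\pi^2/h^2}}{\pi^{3/2}(1-\re^{-2\pi^2/h^2})}\cdot\frac{1-\re^{-2\pi^2/h^2+\sqrt{2}\pi/h}}{\re^{-\pi^2/h^2}}
= \frac{2h}{\pi^{3/2}}\cdot\frac{1-\re^{-2\pi^2/h^2+\sqrt{2}\pi/h}}{1-\re^{-2\pi^2/h^2}}.
\]
Since $\sqrt{2}\pi/h>0$, the ratio $(1-\re^{-2\pi^2/h^2+\sqrt{2}\pi/h})/(1-\re^{-2\pi^2/h^2})$ is at most $1$ (numerator $\leq$ denominator because the exponent is larger, hence the subtracted term is larger), so $|\Phi(x)|\leq 2h/\pi^{3/2}$, and one checks numerically that $2h/\pi^{3/2}\leq c_a \approx 4.934$ for the relevant range $h<\sqrt{2}\pi$ (indeed $2\sqrt{2}\pi/\pi^{3/2} = 2\sqrt{2}/\sqrt\pi\approx 1.60 < c_a$). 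Thus $P:=c_a$ works on all of $\partial\Omega$. The relative-error case is identical, using \eqref{eq:fnb11} on the ray $\arg(z)=\pi/4$ and \eqref{eq:fnbx2} on the real axis, with $P:=c_r/h$; one verifies $[8+10h/\pi^{3/2}]\leq c_r/h$ for $h<\sqrt2\,\pi$, and one should keep the lower bound \eqref{wlb} in mind so that dividing by $|w(z)|$ is legitimate throughout $\overline\Omega$ (where $y\geq 0$).

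The remaining hypothesis of Lemma \ref{MMP} is a growth bound $|\Phi(z)|\leq Q\exp(|z|^b)$ on $\Omega$ for some $b<4$; this is where the real work lies. For $z$ with $0\leq y\leq x$ and $y<\pi/h$ the bound \eqref{eq:fnb3} (or the cruder \eqref{eq:fnb1}) already gives a bound uniform in $z$, so the growth is trivial there. The issue is the unbounded region $0\leq y\leq x$ with $y\geq\pi/h$: here I would bound $|w(z)-I_{h,\alpha}[f_z]|$ directly using $|w(z)|\leq$ (a polynomial times $\re^{y^2-x^2}\leq 1$, since $x\geq y$) together with a crude estimate of $|I_{h,\alpha}[f_z]|$ from the series \eqref{w_Ih}; the dangerous factor is $\re^{-t_k^2}/|z^2-t_k^2|$, but with $x\geq y\geq\pi/h$ one has $|z^2-\tau_k^2|$ bounded below (e.g. by $|y^2-\tau_k^2|$ or by comparison with $x^2$), yielding at worst a bound growing like a fixed power of $|z|$, certainly $O(\exp(|z|^b))$ for any $b>0$. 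Wrapping the $\varepsilon$-trick exactly as in Proposition \ref{prop:new1} (taking $H=\pi/h-\varepsilon$, $\varepsilon=1/\sqrt2$) shows moreover that on the part of this region with $y$ bounded the estimate \eqref{eq:fnb7}–\eqref{eq:fnb8} applies, so the only genuinely new input is the crude polynomial growth control as $|z|\to\infty$. With all three hypotheses of Lemma \ref{MMP} verified, the conclusion $|\Phi(z)|\leq c_a$ (resp. $|\Phi(z)|\leq c_r/h$) holds on all of $\overline\Omega$, which is precisely \eqref{eq:fnb10} (resp. \eqref{eq:fnb11}) for $0\leq y\leq x$. The main obstacle is thus purely the order-of-growth verification on the unbounded sector; everything else is assembly of results already proved.
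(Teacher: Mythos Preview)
Your approach is essentially the paper's: apply the Phragm\'en--Lindel\"of principle of Lemma~\ref{MMP} with $a=4$ to $E_h(z):=w(z)-I^*_{h,\alpha}[f_z]$ (and $e_h(z):=E_h(z)/w(z)$), use Corollary~\ref{cor:xybound} on the ray $\arg z=\pi/4$ and Proposition~\ref{prop:new2} on the real axis for the boundary constant $P$, and supply a crude polynomial growth bound inside $\Omega$. Two slips are worth correcting, though neither derails the argument.

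First, your boundary check for the relative error is misstated: the inequality $8+10h/\pi^{3/2}\leq c_r/h$ that you claim for $h<\sqrt{2}\,\pi$ is false near the endpoint (at $h=\sqrt{2}\,\pi$ the left side exceeds the right by roughly $70.9>60.77/\!\sqrt{2}\,\pi\approx 13.7$). What actually holds is the inequality with the ratio factor $r(h):=(1-\re^{-2\pi^2/h^2+\sqrt{2}\pi/h})/(1-\re^{-2\pi^2/h^2})<1$ retained, exactly as you did correctly in the absolute-error case; since $r(h)\to 0$ as $h\uparrow\sqrt{2}\,\pi$, the product $[8+10h/\pi^{3/2}]\,r(h)$ stays below $c_r/h$ throughout. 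The paper handles this by comparing the full bounds of Proposition~\ref{prop:new2} and Corollary~\ref{cor:xybound} directly rather than stripping the denominators.

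Second, your growth discussion contains two inaccuracies. The bound \eqref{eq:fnb3} is derived under the hypothesis $0\leq x\leq y$, so it does not apply in the region $0\leq y\leq x$; using \eqref{eq:fnb1} with, say, $H=\pi/h+\varepsilon$ does apply there but gives a bound proportional to $|z|$, not a uniform one as you assert. Also, in the far region $y\geq\pi/h$ you bound $w(z)-I_{h,\alpha}[f_z]$ but never account for the correction $C_{h,\alpha}[f_z]$; it is harmless (by \eqref{w_Ch} and $y^2\leq x^2$ it is uniformly bounded), but it should be mentioned. None of this matters for Phragm\'en--Lindel\"of, which only needs $|E_h(z)|\leq Q\exp(|z|^b)$ for some $b<4$; indeed the paper obtains the cleaner statement $|E_h(z)|\leq C^*|z|$ on all of $\Omega$ by applying \eqref{eq:fnb1} with $H=3$ when $y\leq 2$ and with $H=1$ (together with \eqref{w_Ch}) when $y\geq 2$, which you may find tidier than splitting at $y=\pi/h$.
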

\begin{proof} We will prove this proposition by applying Lemma \ref{MMP} with $a=4$, so that $\Omega=\{z=x+\ri y:0< y <x\}$, to the functions $E_h(z):= w(z)-I^*_{h,\alpha}[f_z]$ and $e_h(z) := E_h(z)/w(z)= (w(z)-I^*_{h,\alpha}[f_z])/w(z)$.

We have remarked already that $w(z)$ and  $I^*_{h,\alpha}[f_z]$ are entire as a function of $z$, so that $E_h$ is entire and, noting \eqref{wlb}, $e_h$ is analytic in $\mathrm{Im}(z)>0$ and continuous in $\mathrm{Im}(z)\geq 0$. In particular, $E_h$ and $e_h$ are continuous in $\overline{\Omega}$ and analytic in $\Omega$. Further, if $h<\sqrt{2}\, \pi$, it follows from Corollary \ref{cor:xybound} and Proposition \ref{prop:new2}, on noting that the bounds in Proposition \ref{prop:new2} are smaller than those in Corollary \ref{cor:xybound}, that $E_h$ and $e_h$ satisfy the bounds claimed in the proposition when $z\in\partial \Omega$, i.e.\ for $z$ on $\{x+\ri x:x\geq 0\}$ and on the positive real axis. Thus the proposition follows by Lemma \ref{MMP} if we can show that $E_h(z)$ and $e_h(z)$ do not grow too rapidly as $z\to \infty$ in $\Omega$.

But, if $h<\sqrt{2} \, \pi$, it follows from \eqref{eq:same} and \eqref{eq:fnb1} applied with $H=3$ that, for some constant $C>0$ independent of $z$, $|E_h(z)|\leq C |z|$ if $z\in \Omega$ with $y\leq 2$. Similarly, since $I^*_{h,\alpha,H}[f_z]=I_{h,\alpha}[f_z]$ if $y>H$ and $I^*_{h,\alpha}[f_z] = I_{h,\alpha}[f_z]+C_{h,\alpha}[f_z]$, it follows from \eqref{eq:fnb1} applied with $H=1$ and \eqref{w_Ch} that, for some constant $\widetilde C>0$ independent of $z$, $|E_h(z)|\leq \widetilde C |z|$ if $z\in \Omega$ with $y\geq 2$. Thus $|E_h(z)|\leq C^*|z|$ for $z\in \Omega$, where $C^*:= \max(C,\widetilde C)$, so that also, applying \eqref{wlb}, $|e_h(z)|\leq C^*|z|(1+\sqrt{\pi}\, |z|)$ for $z\in \Omega$. Thus the proposition follows by applying Lemma \ref{MMP}.
\end{proof}

  The following corollary summarises and simplifies, at the cost of a little sharpness, the results of Propositions \ref{prop:new1} and \ref{prop:new3} and of this subsection.
\begin{corollary} \label{cor:int} Suppose that $z=x+\ri y$ with $x\geq 0$, $y\geq 0$, and $h< \sqrt{2}\, \pi$. Then the bounds \eqref{eq:fnb10} and \eqref{eq:fnb11} hold with $c_a$ and $c_r$ given by \eqref{eq:cardef} if $y\leq \max(x,\pi/h)$. The same bounds hold as bounds on $|w(z)-I_{h,\alpha}[f_z]|$ and $|w(z)-I_{h,\alpha}[f_z]|/|w(z)|$, respectively, with the same values of $c_a$ and $c_r$, if $y\geq \max(x,\pi/h)$.
\end{corollary}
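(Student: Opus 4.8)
The plan is to assemble the corollary directly from Propositions~\ref{prop:new1} and~\ref{prop:new3} by splitting the closed first quadrant into the three regimes $\{0\le y\le x\}$, $\{0\le x<y\le \pi/h\}$ and $\{y\ge x,\ y\ge \pi/h\}$, each of which is already covered by one of those results, and then checking that the constants $c_a$, $c_r$ in \eqref{eq:cardef} were chosen large enough, and the denominator $1-\re^{-2\pi^2/h^2+\sqrt{2}\pi/h}$ small enough, to absorb the slightly sharper bounds that come out in some of these regimes. Two elementary observations will be used repeatedly. First, since $h<\sqrt{2}\,\pi$, the exponent $-2\pi^2/h^2+\sqrt{2}\pi/h$ is negative, so $0<1-\re^{-2\pi^2/h^2+\sqrt{2}\pi/h}\le 1-\re^{-2\pi^2/h^2}$, and hence $\re^{-\pi^2/h^2}/(1-\re^{-2\pi^2/h^2})\le \re^{-\pi^2/h^2}/(1-\re^{-2\pi^2/h^2+\sqrt{2}\pi/h})$. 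Second, a one-line computation gives $c_a=(1+\sqrt{\pi}/(2\re))\cdot 4\sqrt{\re/\pi}$ and $c_r=(1+\sqrt{\pi}/(2\re))\cdot 4\sqrt{2\pi\re}\,(1+\sqrt{\pi})$, so $c_a\ge 4\sqrt{\re/\pi}\ge 2\sqrt{\re/\pi}$ and $c_r\ge 4\sqrt{2\pi\re}\,(1+\sqrt{\pi})\ge 4\sqrt{2\pi\re}$; thus $c_a$ dominates every absolute leading constant, and $c_r$ every relative one, appearing in Proposition~\ref{prop:new1}.

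For the first assertion, suppose $0\le y\le \max(x,\pi/h)$. If $y\le x$, then $z$ meets the hypotheses of Proposition~\ref{prop:new3} (using $h<\sqrt{2}\,\pi$), which states exactly \eqref{eq:fnb10} and \eqref{eq:fnb11}. If instead $x<y$, then $\max(x,\pi/h)\ge y>x$ forces $\max(x,\pi/h)=\pi/h$, so $0\le x\le y\le \pi/h$ and Proposition~\ref{prop:new1} gives the sharper bounds \eqref{eq:fnb5} and \eqref{eq:fnb5a}; by the two observations above (applied with leading constants $2\sqrt{\re/\pi}\le c_a$ and $4\sqrt{2\pi\re}/h\le c_r/h$, together with the denominator comparison) these imply \eqref{eq:fnb10} and \eqref{eq:fnb11}.

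For the second assertion, suppose $y\ge \max(x,\pi/h)$, so that $0\le x\le y$, $y\ge \pi/h$ and $h<\sqrt{2}\,\pi$; under exactly these hypotheses Proposition~\ref{prop:new1} yields \eqref{eq:fnb6} and \eqref{eq:fnb6a} as bounds on $|w(z)-I_{h,\alpha}[f_z]|$ and on $|w(z)-I_{h,\alpha}[f_z]|/|w(z)|$. Their denominators already coincide with those of \eqref{eq:fnb10}--\eqref{eq:fnb11}, so it only remains to use $4\sqrt{\re/\pi}\le c_a$ and $4\sqrt{2\pi\re}\,(1+\sqrt{\pi})\le c_r$ from the second observation; this gives precisely \eqref{eq:fnb10} and \eqref{eq:fnb11} with $I^*_{h,\alpha}[f_z]$ replaced by $I_{h,\alpha}[f_z]$.

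There is no real obstacle: the substance of the corollary is the partition of the first quadrant and the recognition that each piece has already been handled, so the work is purely organisational plus the arithmetic verifying $c_a\ge 4\sqrt{\re/\pi}$ and $c_r\ge 4\sqrt{2\pi\re}\,(1+\sqrt{\pi})$. The only points calling for slight care are the implication that ``$x<y$ together with $y\le\max(x,\pi/h)$'' forces $y\le\pi/h$, and the remark that replacing the denominator $1-\re^{-2\pi^2/h^2}$ by $1-\re^{-2\pi^2/h^2+\sqrt{2}\pi/h}$ keeps it positive (by $h<\sqrt{2}\,\pi$) while only weakening the bound.
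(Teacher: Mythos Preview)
Your proof is correct and follows essentially the same approach as the paper: split into the regimes $\{y\le x\}$, $\{x<y\le\pi/h\}$, and $\{y\ge\max(x,\pi/h)\}$, invoking Proposition~\ref{prop:new3} for the first and the four bounds \eqref{eq:fnb5}--\eqref{eq:fnb6a} of Proposition~\ref{prop:new1} for the other two. Your version is simply more explicit than the paper's one-line proof in verifying the constant comparisons $c_a\ge 4\sqrt{\re/\pi}$, $c_r\ge 4\sqrt{2\pi\re}\,(1+\sqrt{\pi})$ and the denominator inequality, all of which the paper leaves to the reader.
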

\begin{proof}
The first claim of the corollary follows from Proposition \ref{prop:new3} and \eqref{eq:fnb5} and \eqref{eq:fnb5a}, and the second follows from \eqref{eq:fnb6} and \eqref{eq:fnb6a}.
\end{proof}

\subsection{Truncating the infinite series} Propositions \ref{prop:new1} and \ref{prop:new3} together provide accurate trapezoidal-rule-based approximations to $w(z)$ in the first quadrant of the complex plane, that can be extended to the whole complex plane using the symmetries \eqref{symm}. But in implementation the infinite series in these approximations must be truncated. We estimate the additional error this introduces in this subsection.

At this point, since we wish to use the fact that $f_z$ is even to reduce computation, we restrict attention to the cases $\alpha=0$ and $\alpha=1/2$, in which cases the trapezoidal rule approximation reduces to \eqref{w_Ih}. In these cases we approximate $I_{h,\alpha}[f_z]$ by
\begin{eqnarray}\label{w_IN}
I^{N}_{h,\alpha}[f_z]:=
\begin{cases}
hf_z(0)+\displaystyle 2h\sum_{k=1}^{N}f_z(\tau_{k}), & \alpha=0\\
\displaystyle 2h\sum_{k=0}^{N}f_z(t_{k}), & \alpha=1/2,
\end{cases}
\end{eqnarray}
with $f_z$ given by \eqref{w_f} and $\tau_{k}$ and $t_{k}$ defined in \eqref{tkdef}.
We will call the error in approximating $I_{h,\alpha}[f_z]$ by $I^{N}_{h,\alpha}[f_z]$ the truncation error, given by
\begin{equation}\label{w_TN}
T^{N}_{h,\alpha}[f_z]:=\displaystyle 2h\,\sum_{k=N+1}^{\infty}f_z(s_k),
\end{equation}
where $s_k:= (k+\alpha)h$.
This is also the error in approximating $I^*_{h,\alpha}[f_z]$ by $I^{*,N}_{h,\alpha}[f_z]$, where $I^*_{h,\alpha}[f_z]$ is defined in \eqref{eq:If*2} and
\begin{equation} \label{eq:I*Ndef}
I^{*,N}_{h,\alpha}[f_z]:=I^{N}_{h,\alpha}[f_z]+C_{h,\alpha}[f_z].
\end{equation}

The following result (\cite[Proposition 3.3.7]{Mohd17}) bounds $T^{N}_{h,\alpha}[f_z]$ for $\pi/4\leq \arg(z)\leq \pi/2$. We use in this proposition the estimate, obtained since $\exp(-t^2)$ is decreasing on $(0,\infty)$ and noting \eqref{Integ_prop}, that
\begin{equation} \label{eq:sumbound}
2h\sum_{k=M}^{\infty} \re^{-s_k^2} \leq  2h \re^{-s_M^2} + 2\int_{s_M}^\infty \re^{-t^2}\, dt \leq \frac{2hs_M+1}{s_M} \,\re^{-s_M^2}, \quad M\in \N.
\end{equation}
\begin{proposition}\label{pro:w_TN3}
Suppose $\alpha=0$ or $1/2$ and $z=x+\ri y$ with $y\geq x\geq0$. Then, for $N\in \N_0$,
\begin{eqnarray} \label{eq:TN1}
|T^{N}_{h,\alpha}[f_z]|&\leq& \frac{(1+2h\,\tau_{N+1})}{\pi \tau_{N+1}^2}\, \re^{-\tau_{N+1}^2}\quad\mbox{and}\\ \label{eq:TN2}
\frac{|T^{N}_{h,\alpha}[f_z]|}{|w(z)|}&\leq&\frac{(1+2h\,\tau_{N+1})(1+2\sqrt{\pi}\,\tau_{N+1})}{\pi \tau_{N+1}^2}\,\re^{-\tau_{N+1}^2}.
\end{eqnarray}
\end{proposition}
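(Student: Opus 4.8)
The plan is to estimate the truncation error \eqref{w_TN} term by term, starting from the crude bound $|T^{N}_{h,\alpha}[f_z]|\le 2h\sum_{k=N+1}^{\infty}|f_z(s_k)|$, where $s_k=(k+\alpha)h$. Since $f_z(t)=\re^{-t^2}F_z(t)$ with $F_z(t)=\ri z/(\pi(z^2-t^2))$ (recall \eqref{w_f}), the key pointwise ingredient, valid for $z=x+\ri y$ with $y\ge x\ge 0$ and every $t>0$, is the pair of lower bounds
$$
|z^2-t^2|\ge |z|\,t \qquad\mbox{and}\qquad |z^2-t^2|\ge |z|^2 .
$$
Both follow by writing $z^2=(x^2-y^2)+2\ri xy$, noting that $y\ge x\ge 0$ forces $\mathrm{Re}(z^2)\le 0$ and $\mathrm{Im}(z^2)\ge 0$, so that $|z^2-t^2|^2=(x^2-y^2-t^2)^2+4x^2y^2=|z|^4+2(y^2-x^2)t^2+t^4\ge |z|^4+t^4$, and then using $|z|^4+t^4\ge 2|z|^2t^2\ge |z|^2t^2$ and $|z|^4+t^4\ge |z|^4$. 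From the first of these, $|f_z(t)|=|z|\re^{-t^2}/(\pi|z^2-t^2|)\le \re^{-t^2}/(\pi t)$, so that $|f_z(s_k)|\le \re^{-s_k^2}/(\pi s_k)$.

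For \eqref{eq:TN1}: I would insert this estimate into $|T^{N}_{h,\alpha}[f_z]|\le 2h\sum_{k\ge N+1}|f_z(s_k)|$, pull out the factor $1/s_k\le 1/s_{N+1}$ (valid for $k\ge N+1$), and apply the summation estimate \eqref{eq:sumbound} with $M=N+1$, obtaining $|T^{N}_{h,\alpha}[f_z]|\le (2hs_{N+1}+1)\,\re^{-s_{N+1}^2}/(\pi s_{N+1}^2)$. Since $s_{N+1}=(N+1+\alpha)h\ge (N+1)h=\tau_{N+1}$ (as $\alpha\ge0$), and since $t\mapsto (2ht+1)\re^{-t^2}/(\pi t^2)=\pi^{-1}(2h/t+1/t^2)\re^{-t^2}$ is a product of positive decreasing functions on $(0,\infty)$ and hence decreasing, replacing $s_{N+1}$ by $\tau_{N+1}$ only enlarges the bound, which is \eqref{eq:TN1}.

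For \eqref{eq:TN2}: by \eqref{wlb} (applicable since $y\ge 0$) one has $|T^{N}_{h,\alpha}[f_z]|/|w(z)|\le (1+\sqrt{\pi}\,|z|)\,2h\sum_{k\ge N+1}|f_z(s_k)|$. The point is to bound each $(1+\sqrt{\pi}\,|z|)|f_z(s_k)|$ uniformly in $z$: combining $|z^2-s_k^2|\ge |z|\,s_k$ and $|z^2-s_k^2|\ge|z|^2$ gives $|z|/|z^2-s_k^2|\le 1/\max(s_k,|z|)$, and then in either case $|z|\le s_k$ or $|z|>s_k$ one finds $(1+\sqrt{\pi}\,|z|)|f_z(s_k)|\le \big(1/(\pi s_k)+1/\sqrt{\pi}\big)\re^{-s_k^2}$. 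Pulling out $1/s_k\le 1/s_{N+1}$ and applying \eqref{eq:sumbound} with $M=N+1$ again yields $|T^{N}_{h,\alpha}[f_z]|/|w(z)|\le (1+\sqrt{\pi}\,s_{N+1})(2hs_{N+1}+1)\,\re^{-s_{N+1}^2}/(\pi s_{N+1}^2)$, and the proof concludes as before, using $1+\sqrt{\pi}\,s_{N+1}\le 1+2\sqrt{\pi}\,s_{N+1}$, $s_{N+1}\ge\tau_{N+1}$, and the fact that $t\mapsto (2ht+1)(2\sqrt{\pi}\,t+1)\re^{-t^2}/(\pi t^2)$ is decreasing on $(0,\infty)$ (it equals $\pi^{-1}$ times $4\sqrt{\pi}\,h+(2h+2\sqrt{\pi})/t+1/t^2$, a positive constant plus a positive decreasing function, times $\re^{-t^2}$).

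The only step needing genuine care is the pointwise estimate on $|z^2-t^2|$: one must extract the \emph{full} factor $|z|$ in the numerator of $f_z$, so that the bounds become uniform over the whole sector $0\le x\le y$, and in particular one must secure the $\tau_{N+1}^{-2}$ (rather than merely $\tau_{N+1}^{-1}$) algebraic prefactor after summation. Everything else is routine bookkeeping, two elementary monotonicity checks, and the already-quoted estimates \eqref{eq:sumbound} and \eqref{Integ_prop}. (The route above actually produces the slightly stronger prefactor $1+\sqrt{\pi}\,\tau_{N+1}$ in place of $1+2\sqrt{\pi}\,\tau_{N+1}$ in \eqref{eq:TN2}, which a fortiori gives the stated bound.)
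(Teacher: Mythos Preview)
Your proof is correct and follows essentially the same route as the paper's: both rest on the identity $|z^2-t^2|^2=|z|^4+2(y^2-x^2)t^2+t^4\ge|z|^4+t^4$ for $y\ge x\ge0$, combined with \eqref{eq:sumbound} and \eqref{wlb}. The only differences are cosmetic --- you work with $|z|$ where the paper uses $y$ together with $|z|\le\sqrt{2}\,y$, and you postpone the passage from $s_{N+1}$ to $\tau_{N+1}$ to a single monotonicity step at the end --- which, as you observe, even yields the slightly sharper prefactor $1+\sqrt{\pi}\,\tau_{N+1}$ in \eqref{eq:TN2}.
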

\begin{proof}
For $z=x+\ri y$ with $y\geq x \geq0$,
\begin{eqnarray*}
|z^2 -s_{k}^2|^{2}=y^4+s_{k}^4+x^4+2x^2y^2+2s_{k}^2(y^2-x^2)\geq y^4+s_k^4\geq y^4+\tau_k^4.
\end{eqnarray*}
Thus, and recalling \eqref{w_f} and using  \eqref{eq:sumbound} with $M=N+1$,
\begin{eqnarray*}
|T^{N}_{h,\alpha}[f_z]|& \leq & \frac{2\sqrt{2}h\,y}{\pi} \sum_{k=N+1}^\infty \frac{\re^{-\tau_k^2}}{\sqrt{y^4+\tau_k^4}}\leq  \frac{\sqrt{2}y\,(1+2h\,\tau_{N+1})}{\pi\tau_{N+1}\sqrt{y^4+\tau_{N+1}^4}}\,\re^{-\tau_{N+1}^2}.
\end{eqnarray*}
Moreover,
\begin{eqnarray*}
y\Big/\sqrt{y^4+\tau_{N+1}^4}\leq\frac{1}{\sqrt{2}\,\tau_{N+1}}\quad\mbox{and}\quad y^2\Big/\sqrt{y^4+\tau_{N+1}^4}\leq 1,
\end{eqnarray*}
so that \eqref{eq:TN1} follows and also
\begin{eqnarray} \label{eq:TN3}
y\,|T^{N}_{h,\alpha}[f_z]|\leq \frac{\sqrt{2}(1+2h\,\tau_{N+1})}{\pi \tau_{N+1}}\,\re^{-\tau_{N+1}^2}.
\end{eqnarray}
Since, by \eqref{wlb}, $|w(z)|^{-1}\leq 1+\sqrt{2\pi} y$ for $0\leq x\leq y$, \eqref{eq:TN2} follows from \eqref{eq:TN1} and \eqref{eq:TN3}.
\end{proof}

The following result (\cite[Propositions 3.3.5, 3.3.6]{Mohd17}) bounds $T^{N}_{h,\alpha}[f_z]$ for $0\leq \arg(z)$ $\leq \pi/4$, so that Propositions \ref{pro:w_TN3} and \ref{pro:w_TN1} together bound the absolute and relative truncation errors in the first quadrant. The case $0\leq \arg(z)\leq \pi/4$ is more subtle because $T^{N}_{h,\alpha}[f_z]$ is unbounded, it has simple poles at $z=s_k$, for $k\geq N+1$, and our bound requires  that the distance of $z$ from this set of poles is $\geq h/4$. Despite this restriction, we can construct accurate approximations covering the whole region $0\leq \arg(z)\leq \pi/4$ because $s_k=\tau_k=kh$ for $\alpha=0$ while $s_k=t_k=\tau_k + h/2$ for $\alpha=1/2$, so that $\{z:|z-s_k|\geq h/4 \mbox{ for either } s_k=\tau_k \mbox{ or } t_k, \mbox{ for }k\geq N+1\}$ includes the whole of $\{z:0\leq \arg(z)\leq \pi/4\}$.

\begin{proposition}\label{pro:w_TN1}
Suppose $\alpha=0$ or $1/2$ and $z=x+\ri y$ with $0\leq y\leq x$ and $|z-s_{k}|\geq h/4$ for $k\geq N+1$. Then, for $N\in \N_0$,
\begin{eqnarray}\label{TN1}
|T^{N}_{h,\alpha}[f_z]|&\leq& c(h,N,\alpha) := \frac{2\sqrt{2}\,(1+2hs_{N+1})\,(h+4s_{N+1})}{\pi h\,s_{N+1}^2}\,\re^{-s_{N+1}^2}\quad\mbox{and}\\ \label{TN1a}
\displaystyle\frac{|T^{N}_{h,\alpha}[f_z]|}{|w(z)|}&\leq&(1+\sqrt{2\pi}s_{N+1})\,c(h,N,\alpha).
\end{eqnarray}
\end{proposition}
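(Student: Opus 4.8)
The plan is to bound the tail series \eqref{w_TN} term by term, using the location of the poles $\pm z$ of $f_z$, and then to sum the resulting Gaussian series as in \eqref{eq:sumbound}. Write $z=x+\ri y$; we may assume $x>0$, since $0\le y\le x$ with $x=0$ forces $z=0$, so $f_z\equiv0$ by \eqref{w_f}, whence $T^N_{h,\alpha}[f_z]=0$ and, as $w(0)=1$, both bounds are trivial. As $\alpha\in\{0,\tfrac12\}$ the $s_k=(k+\alpha)h$ are positive and $h$-spaced, with $s_k\ge s_{N+1}\ge h$ for $k\ge N+1$, and two elementary bounds recur: $|z+s_k|\ge x+s_k\ge s_k$ (taking real parts), and $|z|\le\sqrt2\,x$ (from $y\le x$), so that $|z|/|z+s_k|\le\sqrt2$.

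For \eqref{TN1} I would combine these with the hypothesis $|z-s_k|\ge h/4$: by \eqref{w_f},
$$
|f_z(s_k)|=\frac{\re^{-s_k^2}\,|z|}{\pi\,|z-s_k|\,|z+s_k|}\le\frac{4\sqrt2}{\pi h}\,\re^{-s_k^2}\qquad(k\ge N+1),
$$
so that, by \eqref{w_TN} and \eqref{eq:sumbound} with $M=N+1$,
$$
|T^N_{h,\alpha}[f_z]|\le\frac{8\sqrt2}{\pi}\sum_{k=N+1}^{\infty}\re^{-s_k^2}\le\frac{4\sqrt2\,(1+2hs_{N+1})}{\pi h\,s_{N+1}}\,\re^{-s_{N+1}^2}\le c(h,N,\alpha),
$$
the last step because $2s_{N+1}\le h+4s_{N+1}$.

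For \eqref{TN1a} I would start from \eqref{wlb}, which with $|z|\le\sqrt2\,x$ gives $|w(z)|^{-1}\le1+\sqrt\pi\,|z|\le1+\sqrt{2\pi}\,x$, so $|T^N_{h,\alpha}[f_z]|/|w(z)|\le|T^N_{h,\alpha}[f_z]|+\sqrt{2\pi}\,x\,|T^N_{h,\alpha}[f_z]|$; in view of \eqref{TN1} it then suffices to prove $x\,|T^N_{h,\alpha}[f_z]|\le s_{N+1}\,c(h,N,\alpha)$. This is where the hypothesis must be used carefully and where I expect the real work to lie: $|z-s_k|\ge h/4$ alone is too weak to absorb the extra factor of $x$ when $s_k$ sits near $x$ — but in that regime $s_k$ is itself of size comparable to $x$, so the Gaussian weight $\re^{-s_k^2}$ supplies the needed decay, while away from it $|z-s_k|\ge|x-s_k|$ is already comparable to $\max(x,s_k)$. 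Concretely I would split $\sum_{k\ge N+1}x\,|f_z(s_k)|$ according to whether $|x-s_k|\ge x/2$ (i.e.\ $s_k\le x/2$ or $s_k\ge3x/2$) or $x/2<s_k<3x/2$. On the first set $|z-s_k|\ge|x-s_k|\ge x/2$, which with $|z+s_k|\ge x$ and $|z|\le\sqrt2\,x$ gives $x\,|f_z(s_k)|\le(2\sqrt2/\pi)\,\re^{-s_k^2}$, so by \eqref{eq:sumbound} this contribution is at most $2\sqrt2\,(1+2hs_{N+1})\,\re^{-s_{N+1}^2}/(\pi s_{N+1})$. On the near-diagonal set one uses $|z-s_k|\ge h/4$, $|z+s_k|\ge x+s_k>3x/2$ and $x<2s_k$ to get $x\,|f_z(s_k)|\le(16\sqrt2/(3\pi h))\,s_k\,\re^{-s_k^2}$; summing this needs the first-moment analogue of \eqref{eq:sumbound},
$$
2h\sum_{k=N+1}^{\infty}s_k\,\re^{-s_k^2}\le(2hs_{N+1}+1)\,\re^{-s_{N+1}^2},
$$
obtained in the same way by comparison with $\int_{s_{N+1}}^{\infty}t\,\re^{-t^2}\,dt=\tfrac12\re^{-s_{N+1}^2}$, using that $t\mapsto t\re^{-t^2}$ is decreasing on $[s_{N+1},\infty)$ once $s_{N+1}\ge1/\sqrt2$ (for smaller $s_{N+1}$ the factor $s_{N+1}^{-2}$ in $c(h,N,\alpha)$ leaves ample slack and a direct estimate suffices). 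Adding the two contributions, the $s_{N+1}^{-1}$ terms match those of $s_{N+1}\,c(h,N,\alpha)$ and what is left is the inequality $8/(3h)\le4/h$; this gives $x\,|T^N_{h,\alpha}[f_z]|\le s_{N+1}\,c(h,N,\alpha)$, completing the proof. Thus the bound \eqref{TN1} is essentially immediate once $f_z$ is written out, and the bookkeeping for \eqref{TN1a} — the choice of splitting and the verification that the constant-weighted pieces collapse, after the two Gaussian sums are inserted, to exactly $\le s_{N+1}\,c(h,N,\alpha)$ uniformly in $h$ — is the crux.
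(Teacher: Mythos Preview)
Your proof of \eqref{TN1} is correct and in fact more direct than the paper's: using only $|z-s_k|\ge h/4$ and $|z|/|z+s_k|\le\sqrt2$ you land on $4\sqrt2\,(1+2hs_{N+1})\,\re^{-s_{N+1}^2}/(\pi h\,s_{N+1})$, which is $\le c(h,N,\alpha)$ since $2s_{N+1}\le h+4s_{N+1}$.

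For \eqref{TN1a} both you and the paper reduce, via \eqref{wlb}, to showing $x\,|T^N_{h,\alpha}[f_z]|\le s_{N+1}\,c(h,N,\alpha)$, but the arguments then diverge. You split at $|x-s_k|=x/2$ and, on the near part, invoke a first-moment tail estimate $2h\sum_{k\ge N+1}s_k\,\re^{-s_k^2}\le(2hs_{N+1}+1)\,\re^{-s_{N+1}^2}$; your derivation of this (monotonicity of $t\,\re^{-t^2}$) needs $s_{N+1}\ge1/\sqrt2$, and the complementary case is only asserted, not proved. The paper instead keeps the factor $1/(x+s_{N+1})$ from $|z+s_k|\ge x+s_{N+1}$ and splits at $s_k=\theta x$ with a free parameter $\theta\in(0,1)$: on $\{s_k\le\theta x\}$ it uses $|z-s_k|\ge(1-\theta)x$, on $\{s_k>\theta x\}$ it uses $|z-s_k|\ge h/4$ together with $x<s_M/\theta$ (where $s_M$ is the first node past $\theta x$). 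Both pieces then need only the zeroth-moment bound \eqref{eq:sumbound} and the monotonicity of $(2ht+1)\,\re^{-t^2}$ on $[h,\infty)$, which holds for every $h>0$. Optimising $\theta=4s_{N+1}/(h+4s_{N+1})$ produces exactly $c(h,N,\alpha)$, and the retained factor $1/(x+s_{N+1})$ delivers both \eqref{TN1} and $x\,|T^N|\le s_{N+1}\,c$ in one stroke, with no restriction on $s_{N+1}$. So the paper's route trades your first-moment lemma (and its small-$s_{N+1}$ loose end) for a one-parameter optimisation, while your route buys a cleaner, standalone argument for \eqref{TN1}.
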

\begin{proof} From \eqref{w_f} and \eqref{w_TN}, for $0\leq y\leq x$,
\begin{equation} \label{eq:TNb}
|T^{N}_{h,\alpha}[f_z]|\leq \frac{2h|z|}{\pi} \sum_{k=N+1}^\infty \frac{\re^{-s_k^2}}{|z^2-s_k^2|} \leq \frac{2\sqrt{2}hx}{\pi(x+s_{N+1})} \sum_{k=N+1}^\infty \frac{\re^{-s_k^2}}{|z-s_k|}.
\end{equation}
Thus, and noting \eqref{wlb}, the bounds \eqref{TN1} and \eqref{TN1a} hold if $x=0$.

Choose $\theta$ with $0<\theta<1$. Recalling $z=x+\ri y$, in the case that $x>0$
let $M$ be the smallest integer $\geq N+1$ such that $s_M> \theta x$, so that, if $M>N+1$, it holds that $s_k\leq \theta x$ and $|z-s_k|\geq (1-\theta)x$ for $k<M$. If  $M>N+1$ it follows, using the bound \eqref{eq:sumbound}, that
$$
2hx\sum_{k=N+1}^{M-1} \frac{\re^{-s_k^2}}{|z-s_k|} \leq \frac{2h}{1-\theta}\, \sum_{k=N+1}^{\infty}\re^{-s_k^2} \leq \frac{2hs_{N+1}+1}{(1-\theta)s_{N+1}} \,\re^{-s_{N+1}^2},
$$
while, for $M\geq N+1$, assuming $|z-s_{k}|\geq h/4$ for $k\geq N+1$ and again using \eqref{eq:sumbound},
$$
2hx\sum_{k=M}^{\infty} \frac{\re^{-s_k^2}}{|z-s_k|} \leq  8x\sum_{k=M}^{\infty}\re^{-s_k^2} \leq \frac{4x(2hs_{M}+1)}{hs_M} \re^{-s_{M}^2}\leq \frac{4(2hs_{M}+1)}{h\theta } \,\re^{-s_{M}^2}.
$$
Thus, and since $(2ht+1)\exp(-t^2)$ is decreasing as a function of $t$ on $[h,\infty)$ and $s_M\geq s_{N+1} \geq h$,
\begin{eqnarray} \nonumber
2hx\sum_{k=N+1}^{\infty} \frac{\re^{-s_k^2}}{|z-s_k|} &\leq &(2hs_{N+1}+1)\,\left[\frac{4}{h\theta }  + \frac{1}{(1-\theta)s_{N+1}} \right]\,\re^{-s_{N+1}^2}\\ \label{eq:finalsb}
 &=&\frac{2(2hs_{N+1}+1)(h+4s_{N+1})}{hs_{N+1}}\,\re^{-s_{N+1}^2},
\end{eqnarray}
on choosing $\theta = 4s_{N+1}/(h+4s_{N+1})$ so as to equalise the terms in the square brackets. From \eqref{eq:TNb} and \eqref{eq:finalsb}, on noting that $x+s_{N+1}\geq s_{N+1}$ and $x/(x+s_{N+1})\leq 1$, we see that, for $x>0$, \eqref{TN1} holds and also $x|T^{N}_{h,\alpha}[f_z]| \leq s_{N+1}c(h,N)$. From these bounds and that $|w(z)|^{-1}\leq 1+\sqrt{2\pi} x$ for $0\leq y\leq x$ by \eqref{wlb}, the bound \eqref{TN1a} follows.
\end{proof}

The following corollary summarises and simplifies, at the cost of a little sharpness, the results of Propositions \ref{pro:w_TN3} and \ref{pro:w_TN1} and this subsection.
\begin{corollary} \label{cor:trunc} Suppose $\alpha=0$ or $1/2$, $z=x+\ri y$ with $x\geq 0$, $y\geq 0$, and either $y\geq x$ or $|z-s_k|\geq h/4$ for $k\geq N+1$. Then, for $N\in \N_0$,
\begin{eqnarray}\label{TN1f}
|T^{N}_{h,\alpha}[f_z]|&\leq& c(h,N,0) = \frac{2\sqrt{2}\,(1+2h\tau_{N+1})\,(h+4\tau_{N+1})}{\pi h\,\tau_{N+1}^2}\,\re^{-\tau_{N+1}^2},\quad\mbox{and also}\\ \label{TN1af}
\displaystyle\frac{|T^{N}_{h,\alpha}[f_z]|}{|w(z)|}&\leq&(1+\sqrt{2\pi}\,\tau_{N+1})\,c(h,N,0), \quad \mbox{provided} \quad h \geq 1/(N+1).
\end{eqnarray}
\end{corollary}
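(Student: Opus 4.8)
The plan is to derive the Corollary from Propositions \ref{pro:w_TN3} and \ref{pro:w_TN1} by a two-case split on whether $y\ge x$, followed by routine comparisons of the (generally sharper) explicit bounds already proved with the simplified ones claimed here. It is convenient to write $q(s):=(1+2hs)(h+4s)/s^2 = 8h + (4+2h^2)/s + h/s^2$, so that, in the notation of Proposition \ref{pro:w_TN1}, $c(h,N,\alpha)=\big(2\sqrt2/(\pi h)\big)\,q(s_{N+1})\,\re^{-s_{N+1}^2}$, where $s_{N+1}=\tau_{N+1}$ if $\alpha=0$ (so the formula agrees with \eqref{TN1f}) and $s_{N+1}=t_{N+1}=\tau_{N+1}+h/2>\tau_{N+1}$ if $\alpha=1/2$. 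Since $q'<0$ on $(0,\infty)$ and $\re^{-s^2}$ is decreasing, the map $s\mapsto q(s)\re^{-s^2}$ is decreasing, whence $c(h,N,1/2)\le c(h,N,0)$ for all $h>0$ and $N\in\N_0$.

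In the case $y\ge x$ I would invoke Proposition \ref{pro:w_TN3}. For \eqref{TN1f}, the bound \eqref{eq:TN1} differs from $c(h,N,0)$ only by the factor $2\sqrt2(h+4\tau_{N+1})/h$, which exceeds $1$. For \eqref{TN1af}, the extra factor $1+2\sqrt\pi\,\tau_{N+1}=1+\sqrt2\,\sqrt{2\pi}\,\tau_{N+1}$ appearing in \eqref{eq:TN2} is at most $\sqrt2\,(1+\sqrt{2\pi}\,\tau_{N+1})$, and after cancelling the common factors in \eqref{eq:TN2} and $(1+\sqrt{2\pi}\,\tau_{N+1})c(h,N,0)$ the assertion reduces to $\sqrt2\le 2\sqrt2(h+4\tau_{N+1})/h$, which is immediate. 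This case needs no restriction on $h$.

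In the case $y<x$ the hypothesis of the Corollary forces $|z-s_k|\ge h/4$ for all $k\ge N+1$, so Proposition \ref{pro:w_TN1} applies. For $\alpha=0$ its conclusions \eqref{TN1} and \eqref{TN1a} are literally \eqref{TN1f} and \eqref{TN1af}. For $\alpha=1/2$, \eqref{TN1} gives $|T^N_{h,\alpha}[f_z]|\le c(h,N,1/2)\le c(h,N,0)$ by the monotonicity just noted, which is \eqref{TN1f}; and \eqref{TN1a} gives $|T^N_{h,\alpha}[f_z]|/|w(z)|\le (1+\sqrt{2\pi}\,t_{N+1})\,c(h,N,1/2)=G(t_{N+1})$, where $G(s):=\big(2\sqrt2/(\pi h)\big)(1+\sqrt{2\pi}\,s)\,q(s)\,\re^{-s^2}$, while the target \eqref{TN1af} is precisely $G(\tau_{N+1})$. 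So everything comes down to showing $G(t_{N+1})\le G(\tau_{N+1})$, and this is the one step requiring genuine care, and also where the hypothesis $h\ge 1/(N+1)$ enters: it guarantees $\tau_{N+1}=(N+1)h\ge 1$. Expanding $(1+\sqrt{2\pi}\,s)\,q(s)=A+Bs+C/s+D/s^2$ with $A,B,C,D>0$ depending only on $h$, and using that $\re^{-s^2}$, $s^{-1}\re^{-s^2}$ and $s^{-2}\re^{-s^2}$ are decreasing on $(0,\infty)$ while $s\,\re^{-s^2}$ is decreasing on $[1/\sqrt2,\infty)$, one obtains that $G$ is decreasing on $[1/\sqrt2,\infty)\supseteq[1,\infty)$. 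Since $t_{N+1}>\tau_{N+1}\ge 1$, this yields $G(t_{N+1})\le G(\tau_{N+1})$ and completes the proof.
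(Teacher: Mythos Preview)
Your proof is correct and follows essentially the same route as the paper's: a case split $y\ge x$ versus $y<x$, invoking Propositions~\ref{pro:w_TN3} and~\ref{pro:w_TN1} respectively, with the only nontrivial step being the monotonicity argument for $\alpha=1/2$ in the case $y<x$, which both you and the paper handle by decomposing the relevant product into pieces of the form $s^m\re^{-s^2}$ (for $m=-2,-1,0,1$) and using that each is decreasing on the relevant range (this being where $h\ge 1/(N+1)$, i.e.\ $\tau_{N+1}\ge 1$, enters). Your version is slightly more explicit than the paper's in spelling out the $y\ge x$ comparison, but the substance is the same.
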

\begin{proof} Proposition \ref{pro:w_TN3} implies the above bounds hold when $y\geq x$. If  $y\leq x$ the above bounds are immediate from Proposition \ref{pro:w_TN1} in the case $\alpha=0$. They hold also when $\alpha=1/2$ as: i) $c(h,N,1/2)\leq c(h,N,0)$, since $s^{-m}\re^{-s^2}$ decreases as $s$ increases on $(0,\infty)$ for $m=0,1,2$; ii) $t_{N+1}c(h,N,1/2)\leq \tau_{N+1}c(h,N,0)$ if $\tau_{N+1}\geq 1$ (i.e., $h\geq 1/(N+1)$), since also $s\re^{-s^2}$ decreases as $s$ increases on $[1,\infty)$.
\end{proof}

\subsection{Proof of the main theorem} In this subsection we bring together the bounds on the error in the trapezoidal rule approximation (Corollary \ref{cor:int}) and on the truncation error (Corollary \ref{cor:trunc}) to bound the errors in the truncated trapezoidal rule approximation \eqref{w_IN} and its modification \eqref{eq:I*Ndef}. Clearly,
\begin{eqnarray} \label{tri1}
|w(z)- I_{h,\alpha}^N[f_z]| &\leq &|w(z) - I_{h,\alpha}[f_z]| + |T_{h,\alpha}^N[f_z]| \quad \mbox{and}\\ \label{tri2}
|w(z)- I_{h,\alpha}^{*,N}[f_z]| &\leq &|w(z) - I^*_{h,\alpha}[f_z]| + |T_{h,\alpha}^N[f_z]|.
\end{eqnarray}
Applying Corollaries \ref{cor:int} and \ref{cor:trunc} it follows that, for $\alpha = 0$ and $1/2$ and $z=x+\ri y$ with $x\geq 0$, $0\leq y\leq \max(x,\pi/h)$,
\begin{eqnarray} \label{eq:finalb1}
|w(z)- I_{h,\alpha}^{*,N}[f_z]| &\leq & c_a\, \frac{\re^{-\pi^2/h^2}}{1- \re^{-2\pi^2/h^2+\sqrt{2}\pi/h}} + c(h,N,0), \quad \mbox{where}\\ \nonumber
c(h,N,0) &:=& \frac{2\sqrt{2}\,(1+2h\tau_{N+1})\,(h+4\tau_{N+1})}{\pi h\,\tau_{N+1}^2}\,\re^{-\tau_{N+1}^2},
\end{eqnarray}
provided $y\geq x$ or $|z-s_k| \geq h/4$ for $k\geq N+1$. Similarly,  applying Corollaries \ref{cor:int} and \ref{cor:trunc},
\begin{eqnarray} \label{eq:finalb2}
|w(z)- I_{h,\alpha}^{N}[f_z]| &\leq & c_a\, \frac{\re^{-\pi^2/h^2}}{1- \re^{-2\pi^2/h^2+\sqrt{2}\pi/h}} + c(h,N,0)
\end{eqnarray}
for $\alpha = 0$ and $1/2$, if $z=x+\ri y$ with $x\geq 0$ and $y\geq \max(x,\pi/h)$.

We choose the  step size $h$, as a function of $N$, to approximately balance the contributions from the trapezoidal rule error and the truncation error in the above error bounds. Precisely, we choose $h$ so that the exponents of $\re^{-\pi^2/h^2}$ and $e^{-\tau_{N+1}^2}$ are equal, i.e.\ we define $h:=\sqrt{\pi/(N+1)}$ as in \eqref{hdef}. With this choice of $h$ we have $\pi/h=\tau_{N+1} = \sqrt{(N+1)\pi}$,
\begin{equation} \label{eq:cwithhchoice}
c(h,N,0) =\frac{2\sqrt{2}\,(1+2\pi)\,(5+4N)}{\pi^2(N+1)}\,\re^{-(N+1)\pi} \leq \frac{10\sqrt{2}\,(1+2\pi)}{\pi^2}\,\re^{-(N+1)\pi},
\end{equation}
and
\begin{eqnarray} \label{eq:etermb}
\hspace*{3ex} \frac{\re^{-\pi^2/h^2}}{1- \re^{-2\pi^2/h^2+\sqrt{2}\pi/h}} = \frac{\exp(-(N+1)\pi)}{1- \exp\left(-2(N+1)\pi+\sqrt{2(N+1)\pi}\,\right)} \leq c^* \, \re^{-(N+1)\pi},
\end{eqnarray}
where
\begin{equation} \label{eq:c*def}
c^* := \left(1- \exp\left(-2\pi+\sqrt{2\pi}\right)\right)^{-1} \approx 1.0234.
\end{equation}
This leads to our main theorem.
\begin{theorem} \label{thm:main1}
Suppose $\alpha=0$ or $1/2$, $N\in \N_0$, $h$ is defined by \eqref{hdef}, and $z=x+\ri y$. If $\pi/h \geq y\geq x\geq 0$ or $0\leq y\leq x$ and $|z-s_k|\geq h/4$ for $k\geq N+1$, then
\begin{eqnarray}\label{final1}
|w(z)- I_{h,\alpha}^{*,N}[f_z]|\leq C_1\, \re^{-\pi N}\;\;\;\mbox{and} \;\;\; 
\frac{|w(z)- I_{h,\alpha}^{*,N}[f_z]|}{|w(z)|}\leq  C_2\, \sqrt{N+1}\,\re^{-\pi N},
\end{eqnarray}
where
\begin{eqnarray} \label{C1def}
C_1 &:=& \frac{2(2\re+\sqrt{\pi})}{\re^\pi\sqrt{\re\pi}\left(1- \exp\left(-2\pi+\sqrt{2\pi}\right)\right)}+ \frac{10\sqrt{2}\,(1+2\pi)}{\re^\pi\pi^2} \approx 0.6692 \quad \mbox{and}\\ \label{C2def}
C_2 & := & \frac{2\sqrt{2}\,(1+\sqrt{\pi})(2\re+\sqrt{\pi})}{\re^\pi\sqrt{\re}\left(1- \exp\left(-2\pi+\sqrt{2\pi}\right)\right)}+ \frac{10(1+2\pi)(2\pi+\sqrt{2})}{\re^\pi\pi^2} \approx 3.971.
\end{eqnarray}
Further, if $x\geq 0$ and $y\geq \max(x,\pi/h)$, then
\begin{eqnarray}\label{final3}
|w(z)- I_{h,\alpha}^{N}[f_z]|\leq C_1\, \re^{-\pi N}\;\;\; \mbox{and} \;\;\;
\frac{|w(z)- I_{h,\alpha}^{N}[f_z]|}{|w(z)|}\leq  C_2\, \sqrt{N+1}\,\re^{-\pi N}.
\end{eqnarray}
\end{theorem}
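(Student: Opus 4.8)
The plan is to combine the triangle inequalities \eqref{tri1}--\eqref{tri2}, the trapezoidal-rule error estimates of Corollary \ref{cor:int}, the truncation-error estimates of Corollary \ref{cor:trunc}, and the arithmetic simplifications \eqref{eq:cwithhchoice}--\eqref{eq:c*def} that are already recorded for the specific step size $h=\sqrt{\pi/(N+1)}$; the proof is then essentially bookkeeping, since the analysis has all been done. First I would note that, for $h$ given by \eqref{hdef} and any $N\in\N_0$, we have $h=\sqrt{\pi/(N+1)}\leq\sqrt{\pi}<\sqrt{2}\,\pi$, that $\pi/h=\tau_{N+1}=\sqrt{(N+1)\pi}$ so $\re^{-\pi^2/h^2}=\re^{-\tau_{N+1}^2}=\re^{-(N+1)\pi}$, and that $h\geq 1/(N+1)$ (equivalently $\pi(N+1)\geq 1$), which holds for every $N\geq 0$; these observations place us within the hypotheses of both Corollary \ref{cor:int} and Corollary \ref{cor:trunc}.

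Next I would treat the two regimes in the theorem separately but in parallel. In the first regime --- either $\pi/h\geq y\geq x\geq 0$, or $0\leq y\leq x$ with $|z-s_k|\geq h/4$ for $k\geq N+1$ --- we have $y\leq\max(x,\pi/h)$, so the first part of Corollary \ref{cor:int} bounds $|w(z)-I^*_{h,\alpha}[f_z]|$ by \eqref{eq:fnb10} and the relative error by \eqref{eq:fnb11}; and in each case either $y\geq x$ or the pole-separation condition holds, so Corollary \ref{cor:trunc} bounds $|T^N_{h,\alpha}[f_z]|$ by $c(h,N,0)$ and the relative truncation error by $(1+\sqrt{2\pi}\,\tau_{N+1})c(h,N,0)$. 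Adding these via \eqref{tri2} and via the analogous relative triangle inequality $|w(z)-I^{*,N}_{h,\alpha}[f_z]|/|w(z)|\leq |w(z)-I^*_{h,\alpha}[f_z]|/|w(z)| + |T^N_{h,\alpha}[f_z]|/|w(z)|$ gives \eqref{eq:finalb1} together with its relative counterpart $\tfrac{c_r}{h}\cdot\tfrac{\re^{-\pi^2/h^2}}{1-\re^{-2\pi^2/h^2+\sqrt{2}\pi/h}} + (1+\sqrt{2\pi}\,\tau_{N+1})c(h,N,0)$. The second regime $y\geq\max(x,\pi/h)$ is handled identically, now invoking the second halves of Corollaries \ref{cor:int} and \ref{cor:trunc} (here $y\geq x$ makes the truncation hypothesis automatic, and the corollary bounds are stated directly for $I_{h,\alpha}[f_z]$) together with \eqref{tri1}, giving \eqref{eq:finalb2} and its relative analogue.

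Finally I would substitute $h=\sqrt{\pi/(N+1)}$ into these four bounds and simplify. For the absolute bounds, \eqref{eq:etermb}--\eqref{eq:c*def} bound the trapezoidal-rule term by $c_a c^*\re^{-(N+1)\pi}$ and \eqref{eq:cwithhchoice} bounds $c(h,N,0)$ by $\tfrac{10\sqrt{2}\,(1+2\pi)}{\pi^2}\re^{-(N+1)\pi}$; summing and extracting a factor $\re^{-\pi}$ yields $C_1\re^{-\pi N}$ with $C_1$ exactly as in \eqref{C1def}. For the relative bounds, $c_r/h = \tfrac{c_r}{\sqrt{\pi}}\sqrt{N+1}$, while for the relative truncation term I would use $1+\sqrt{2\pi}\,\tau_{N+1} = 1 + \sqrt{2}\,\pi\sqrt{N+1}\leq(1+\sqrt{2}\,\pi)\sqrt{N+1}$ (valid since $N\geq 0$), multiply by \eqref{eq:cwithhchoice}, and use $\sqrt{2}\,(1+\sqrt{2}\,\pi)=2\pi+\sqrt{2}$; extracting $\re^{-\pi}$ and summing then gives $C_2\sqrt{N+1}\,\re^{-\pi N}$ with $C_2$ as in \eqref{C2def}. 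The only step needing any care --- the nearest thing to an obstacle --- is checking that the power $\sqrt{N+1}$ emerges exactly and that the algebraic constants collapse precisely to \eqref{C1def}--\eqref{C2def} rather than to a slightly larger bound.
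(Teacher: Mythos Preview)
Your proposal is correct and follows essentially the same approach as the paper: split via the triangle inequalities \eqref{tri1}--\eqref{tri2}, apply Corollaries \ref{cor:int} and \ref{cor:trunc}, insert the simplifications \eqref{eq:cwithhchoice}--\eqref{eq:etermb}, and bound $1+\sqrt{2\pi}\,\tau_{N+1}\leq (1+\sqrt{2}\,\pi)\sqrt{N+1}$ to extract the $\sqrt{N+1}$ factor. The only minor omission is that in your relative-error paragraph you write $c_r/h$ rather than $c_r c^*/h$ for the trapezoidal contribution; the $c^*$ factor from \eqref{eq:etermb} is needed there too (as you do correctly for the absolute bound) so that the constants collapse to \eqref{C2def}.
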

\begin{proof} The bounds on $|w(z)- I_{h,\alpha}^{*,N}[f_z]|$ and $|w(z)- I_{h,\alpha}^{N}[f_z]|$ follow from \eqref{eq:finalb1}, \eqref{eq:finalb2}, \eqref{eq:cwithhchoice}, \eqref{eq:etermb}, and the definitions, \eqref{eq:cardef} and \eqref{eq:c*def}, of $c_a$ and $c^*$.  With $h$ defined by \eqref{hdef}, it follows that $h\geq 1/(N+1)$ and
$$
1+\sqrt{2\pi}\,\tau_{N+1} = 1+ \pi\sqrt{2(N+1)} \leq \frac{\sqrt{2}+2\pi}{\sqrt{2}}\, \sqrt{N+1}.
$$
Thus Corollaries \ref{cor:int} and \ref{cor:trunc}, together with \eqref{tri1}, \eqref{tri2}, \eqref{eq:cwithhchoice}, and \eqref{eq:etermb}, imply, with $h$ defined by \eqref{hdef}, that
$|w(z)- I_{h,\alpha}^{*,N}[f_z]|/|w(z)|$ and $|w(z)- I_{h,\alpha}^{N}[f_z]|/|w(z)|$ are both bounded above by
$$
\left(\frac{c_rc^*}{h} + \frac{10(1+2\pi)(2\pi + \sqrt{2})}{\pi^2}\, \sqrt{N+1}\right)\,\re^{-(N+1)\pi} = C_2 \, \sqrt{N+1}\,\re^{-\pi N},
$$
under their respective constraints on $x$ and $y$.
\end{proof}

The above theorem justifies approximating $w(z)$ by $I_{h,\alpha}^{N}[f_z]$, with $\alpha=0$ or $1/2$, if $h$ is given by \eqref{hdef}, $x\geq 0$ and $y\geq \max(x,\pi/h)$; we choose, arbitrarily, the midpoint-rule-based approximation $w_N^{\mathrm{M}}(z) := I_{h,1/2}^{N}[f_z]$, given explicitly by \eqref{IN1}. If $y< \max(x,\pi/h)$, the above theorem suggests approximating by $I_{h,\alpha}^{*,N}[f_z]$, with $\alpha=0$ or $1/2$. For $0\leq x\leq y<\pi/h$ we choose the modified midpoint-rule-based approximation $w_N^{\mathrm{MM}}(z):= I_{h,1/2}^{*,N}[f_z]$, given explicitly by \eqref{IN2}. This choice ensures that the distance of $z$ from the set of quadrature points $\{t_0,\ldots,t_N\}$ is $\geq h/(2\sqrt{2})$, so that the size of the largest term in the sum \eqref{IN1} does not exceed $1/(\pi\sqrt{2})$ and there is no loss of precision through cancellation between the two terms in the sum \eqref{IN2}. For $0\leq y<x$ we approximate either by $w_N^{\mathrm{MM}}(z)$ or by the modified trapezoidal-based approximation $w_N^{\mathrm{MT}}(z):= I_{h,0}^{*,N}[f_z]$, written explicitly in \eqref{IN3}. Which of these we use is determined by the rule \eqref{wNdef}. This rule ensures that $|z-s_k|\geq h/4$ for $k\in \N_0$, where $s_k=t_k$ when $\alpha=1/2$, $s_k=\tau_k$ when $\alpha=0$, so that Theorem \ref{thm:main1} applies and, in the use of both \eqref{IN2} and \eqref{IN3}, we avoid loss of precision through cancellation between nearly equal terms.

\begin{proof}[Proof of Theorem  \ref{thm: w_main}]
Noting the discussion in the above paragraph, the bounds in Theorem \ref{thm: w_main} follow, for $x\geq 0$, $y\geq 0$, immediately from those in Theorem \ref{thm:main1}. That the absolute error bound holds in the whole complex plane, and that the bound on the relative error holds in $\{z:y\geq 0\}$, follows from the bounds in the first quadrant and the symmetry relations and definitions \eqref{symm} and \eqref{symm2}.
\end{proof}

\section{Survey of existing methods} \label{sec:survey} There are a number of other schemes for computation of $w(z)$ for complex $z$ and we briefly summarise the best of
these, making connections with \eqref{wNdef}. Most use variations on polynomial or rational approximation, with different schemes in different regions of the first quadrant (leading, through \eqref{symm}, to approximation in the whole complex plane). Indeed, our own approximation \eqref{wNdef} uses three formulae \eqref{IN1}--\eqref{IN3}, with \eqref{IN1} rational and \eqref{IN2}--\eqref{IN3} rational with meromorphic corrections in terms of exponential functions.

Gautschi \cite{WG70} advocated, for larger $z$, the rational approximation
\begin{equation} \label{eq:nthconvergent}
w(z) \approx \frac{\ri/\sqrt{\pi}}{z-}\frac{1/2}{z-}\frac{2/2}{z-}\frac{3/2}{z-}\cdots \frac{(n-1)/2}{z},
\end{equation}
 the $n$th convergent of the beautiful Laplace continued fraction representation for $w(z)$ (specifically
suggesting $n=9$). Gautschi notes that: i) by construction the $n$th convergent is asymptotically accurate, with error $O(|z|^{-2n-1})$ as $|z|\to \infty$, uniformly in the first quadrant; ii) the $n$th convergent converges to $w(z)$ as $n\to\infty$ if and only if $\mathrm{Im}(z)>0$; iii) remarkably, for $\mathrm{Im}(z)>0$, the $n$th convergent coincides with the approximation obtained by approximating \eqref{wint} by an $n$-point Gauss-Hermite rule. For smaller $z$ Gautschi \cite{WG70} proposed (rational) approximations that are truncated Taylor expansions with the coefficients approximated by convergents of continued fractions.

This methodology, carefully tuned, is the basis of TOMS Algorithm 680 (Poppe and Wijers \cite{PW90})
 which achieves a relative error of $10^{-14}$ over nearly all the complex plane using, in the first quadrant: i) Maclaurin polynomials of degree $\leq 55$ for the odd function $\erf(-\ri z)$ (substituted into \eqref{wdef}) in an ellipse around the origin; ii) the convergents \eqref{eq:nthconvergent} with $n\leq 18$ outside a larger ellipse; iii) the more expensive mix of Taylor expansion and continued fraction calculation proposed by Gautschi \cite{WG70} in between. This algorithm has been used as a benchmark by several later authors.

Weideman \cite{Weid94,Weid95} proposed (the derivation starts from \eqref{wint}) the single rational approximation
\begin{equation}\label{weid}
w(z)\approx\frac{1}{\sqrt{\pi}(L-\ri z)}+\frac{2}{(L-\ri z)^2}\sum_{n=0}^{N-1} a_{n+1}\left(\frac{L+\ri z}{L-\ri z}\right)^n, \quad \mbox{for } \mathrm{Im}(z)\geq 0,
\end{equation}
where the size of $N$ controls the accuracy of the approximation, $L:=2^{-1/4}N^{1/2}$, and the $a_n$ are discrete Fourier coefficients that can be precomputed by the FFT.
He argues, based on operation counts, that, for intermediate values of $|z|$, the work required to compute $w(z)$ to $10^{-14}$ relative accuracy is much smaller for \eqref{weid} than for the Poppe and Wijers algorithm \cite{PW90}.

Zaghloul and Ali proposed a method (see TOMS Algorithm 916 \cite{Zag12} and the refinements in \cite{Zaghloul16}, and cf.\ \cite{Salzer51} and \cite[(7.1.29)]{AS64}) starting from
\begin{equation}\label{erf_zaghloul}
\erf(z)=\erf(x) +\frac{2\re^{-x^2}}{\sqrt{\pi}}\int_{0}^{y}\re^{t^2}\,\sin(2xt)\,dt+\frac{2\ri\,\re^{-x^2}}{\sqrt{\pi}}\int_{0}^{y}\re^{t^2}\cos(2xt)\,dt,
\end{equation}
for $z=x+\ri y$. They approximate
\begin{eqnarray}\label{Zag_approx1}
w(z)\approx u(x,y)+\ri v(x,y), \quad x,y\geq0,
\end{eqnarray}
where
\begin{eqnarray*}\nonumber
u(x,y)&:=& \re^{-x^2}\mathrm{erfcx}(y)\cos(2xy)+\frac{2a\sin^2(xy)}{\pi y}\,\re^{-x^2}+\frac{ay}{\pi}\left(-2\cos(2xy)\,S_{1}+S_{2}+S_{3}\right),\\
v(x,y)&:=& -\re^{-x^2}\mathrm{erfcx}(y)\sin(2xy)+\frac{a\sin(2xy)}{\pi y}\,\re^{-x^2}+\frac{a}{\pi}\left(2y\sin(2xy)\,S_{1}-S_{4}+S_{5}\right),
\end{eqnarray*}
$\erfcx(y):=\re^{y^2}\erf(y)$, and $S_j$, $j=1,\ldots,5$, are the following summations reminiscent of the trapezoidal rule approximations \eqref{I*(h,0)}:
\begin{equation}\label{Zag_sums}
\begin{split}
S_{1}&:=\sum_{k=1}^{\infty}\left(\frac{1}{a^2k^2+y^2}\right)\,\re^{-(a^2k^2+x^2)}, \quad
S_{2}:=\sum_{k=1}^{\infty}\left(\frac{1}{a^2k^2+y^2}\right)\,\re^{-(ak+x)^2},\\
S_{3}&:=\sum_{k=1}^{\infty}\left(\frac{1}{a^2k^2+y^2}\right)\,\re^{-(ak-x)^2}, \quad
S_{4}:=\sum_{k=1}^{\infty}\left(\frac{ak}{a^2k^2+y^2}\right)\,\re^{-(ak+x)^2},\\
S_{5}&:=\sum_{k=1}^{\infty}\left(\frac{ak}{a^2k^2+y^2}\right)\,\re^{-(ak-x)^2}.
\end{split}
\end{equation}
The authors have supplied us with their Matlab implementation \verb+Faddeyeva_v2(z,M)+ \cite{Zaghloul16}, where the parameter $M$ is the number of accurate significant figures required, and the code enforces $4\leq M\leq13$. In this code the choice $a=1/2$ is made and the sums in \eqref{Zag_sums} are truncated, the number of terms retained depending on $M$. Zaghloul and Ali \cite{Zag12} (and see \cite{Zaghloul16}) present numerical evidence that the approximation \eqref{Zag_approx1}, with $a=1/2$ and appropriate truncation of the infinite sums \eqref{Zag_sums}, is more accurate and faster than TOMS Algorithm 680 \cite{PW90}.  This algorithm and code has been used by Zaghloul \cite{Zag17} to benchmark a more efficient, low accuracy ($<4\times 10^{-5}$ maximum relative error in both real and imaginary parts) approximation to $w(z)$ for $z$ in the first quadrant.

Abrarov et al.\ \cite{Abrar18} (and see \cite{Abrar15}) proposed recently another method for computing $w(z)$ using modified rational approximations, namely
\begin{eqnarray}\label{abrar_approx3}
w(z)\approx
\begin{cases}
\psi_{1}(z),\mbox{ if } z\in D_{1},\\
\psi_{2}(z),\mbox{ if }z\in D_{2},\\
\psi_{3}(z),\mbox{ if }z\in D_{3},
\end{cases}
\end{eqnarray}
where $D_{1}:=\{z=x+\ri y:|z|<8\mbox{ and } y> 0.05x\}$, $D_{2}:=\{z=x+\ri y:|z|<8\mbox{ and } y\leq 0.05x\}$, $D_{3}:=\{z:|z|\geq 8\}$,
\begin{eqnarray}\label{abrar_approx1}
\hspace*{4ex} \psi_{1}(z):=\sum_{m=1}^{M}\displaystyle\frac{A_{m}+ B_{m}(z+\ri\alpha/2)}{C^2_{m}-(z+\ri\alpha/2)^2}, \quad \psi_{2}(z):=\re^{-z^2}+z\sum_{m=1}^{M+2}\displaystyle\frac{\alpha_{m}- \beta_{m}z^2}{\gamma_{m}-\theta_{m}z^2+z^4},
\end{eqnarray}
the coefficients $A_{m}$, $B_{m}$, $C_{m}$, $\alpha_{m}$, $\beta_{m}$, $\gamma_{m}$ and $\theta_{m}$ are specified in \cite{Abrar18}, and $\psi_3(z)$ is approximately \eqref{eq:nthconvergent} with $n=10$ (see \cite[Equation (9)]{Abrar18}).
Abrarov et al.\ \cite{Abrar18} present numerical evidence to show that \eqref{abrar_approx3} achieves an accuracy of $10^{-13}$ using $\alpha=2.75$ and $M=23$ in \eqref{abrar_approx1}. 




\section{Numerical results}\label{sec: w_num}
In this section we show calculations that illustrate and support Theorem \ref{thm: w_main}, and that compare the accuracy and efficiency of our approximation $w_{N}(z)$ given by \eqref{wNdef} to those of the approximations \eqref{weid}, \eqref{Zag_approx1}, and \eqref{abrar_approx3}. We omit comparison with the method of \cite{Zag17} because of its limited accuracy, and omit comparison with the older algorithm of \cite{PW90} because of evidence, discussed in \S3, that the approximation \eqref{Zag_approx1} is superior.

\begin{figure}
\includegraphics[width=13cm]{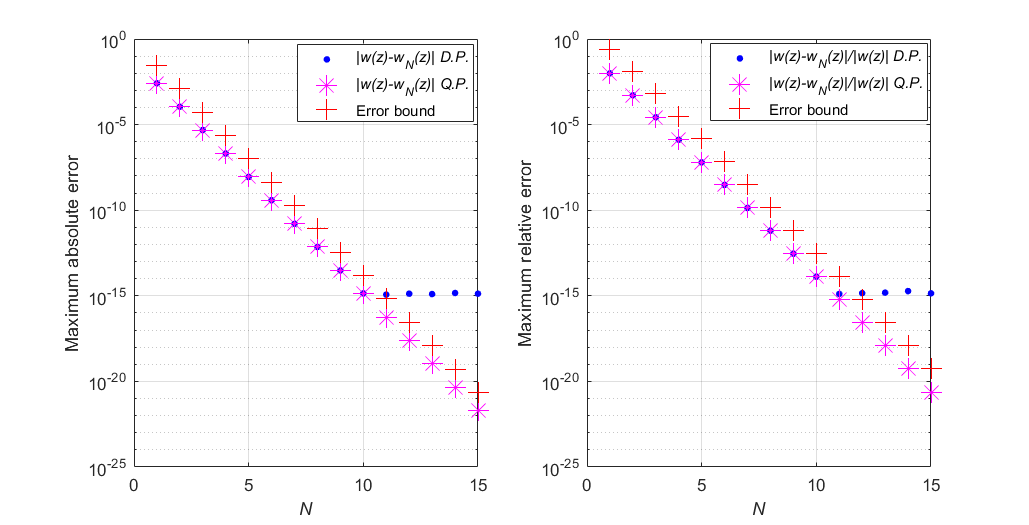}
\caption{Maximum absolute and relative errors in the approximation \eqref{wNdef} and the error bounds of Theorem \ref{thm: w_main}, plotted against $N$,  with $w_N(z)$ calculated in standard double precision (D.P.) and in quadruple precision (Q.P.). The maxima are taken over
a large number of points $z$ in the first quadrant, ranging from exponentially small to exponentially
large (see text for details).
}
\label{w_fig1}
\end{figure}

In Figure \ref{w_fig1} we plot estimates of the maximum values in the first quadrant of the absolute and relative errors in our approximation \eqref{wNdef} to $w(z)$. We show results for two implementations. The first of these uses the Matlab code \verb+wTrap(z,N)+, provided in Table SM1 of the supplementary material to this paper \cite{ACW202}, which computes our approximation $w_N(z)$ using standard double precision arithmetic in Matlab. The second implementation uses the Matlab code \verb+wTrap_Q(z,N)+ in Table SM4 of  \cite{ACW202} which computes $w_N(z)$ in quadruple precision arithmetic 
using the Multiprecision Computing Toolbox ADVANPIX (\url{www.advanpix.com}). In each case the maximum values we plot are discrete maxima taken over the $1,602,801$ points $z=10^{p}e^{i\theta}$, with $p=-6(0.0006)6$ and $\theta=0(\pi/1600)\pi/2$, a superset of the $40,401$ test values in Weideman \cite{Weid94,Weid95}. Whichever approximation for $w_{N}(z)$ is used, we use as the exact value for $w(z)$ the approximation $w_{20}(z)$, given by \eqref{wNdef} and computed by a call to \verb+wTrap_Q(z,N)+ with $N=20$. This approximation is predicted by Theorem \ref{thm: w_main} to have absolute and relative errors of $<3.5\times 10^{-28}$ and $<9.4\times 10^{-27}$, respectively, throughout the first quadrant. We expect these error bounds to be achieved working in quadruple precision arithmetic where the machine epsilon is $2^{-112}\approx 1.9\times 10^{-34}$.

We observe in Figure \ref{w_fig1} the rate of exponential convergence predicted by Theorem \ref{thm: w_main}. When evaluated in standard double precision arithmetic the approximation $w_{N}(z)$ achieves, with $N=11$ over this set of discrete points in the first quadrant, maximum absolute and relative errors which are $< 2\times 10^{-15}$.

\begin{table}[h]
\centering
\begin{tabular}{ |c||c|c|r|  }
 \hline
 Algorithm & \makecell{Maximum\\ abs.\ error} & \makecell{Maximum\\ rel.\ error} & \makecell{Computing\\ time (seconds)}\\
 \hline
 \verb+wTrap(z,11)+     & $1.19\times10^{-15}$  & $1.31\times10^{-15}$& $4.29\,\, (\pm 0.08)$  \\
 \hline
 \verb+cef(z,40)+ \cite{Weid94} & $1.33\times10^{-15}$   & $1.33\times10^{-15}$&$4.20\,\, (\pm 0.02)$  \\
 \hline
\verb+fadsamp(z)+  \cite{Abrar18} & $3.78\times10^{-14}$   & $3.78\times10^{-14}$&$5.74\,\,(\pm 0.04)$  \\
 \hline
 \verb+Faddeyeva_v2(z,13)+ \cite{Zaghloul16} &$4.07\times10^{-15}$  & $1.71\times 10^{-13}$&$11.00\,\,(\pm 0.11)$  \\
 \hline
\end{tabular}
\caption{Maximum absolute and relative errors for the Matlab codes implementing the approximations \eqref{wNdef}, \eqref{weid},  \eqref{abrar_approx3}, and \eqref{Zag_approx1}. The computing times are mean and s.d.\ of 25 executions.}
\label{table:w_compt_imes}
\end{table}

\vspace*{-1ex}
In Table \ref{table:w_compt_imes} we compare the accuracy and efficiency of our approximation and our (double precision) Matlab code  \verb+wTrap(z,N)+ with  (double precision) Matlab implementations of the approximations \eqref{weid}, \eqref{Zag_approx1}, and \eqref{abrar_approx3}. Results are shown in Table \ref{table:w_compt_imes} for:
\begin{enumerate}
  \item Our approximation $w_{N}(z)$ with $N=11$ implemented by the call \verb+wTrap(z,11)+ to the Matlab code provided in \cite[Table SM1]{ACW202};
  \item Weideman's approximation \eqref{weid} with $N=40$ (this choice of $N$ ensures high accuracy throughout the whole first quadrant, see \cite[Figure 8]{Weid94,Weid95}, \cite[Figure 2]{Mohd14}), implemented by the call \verb+cef(z,40)+ to the Matlab code in \cite[Table 1]{Weid94};
  \item The approximation of Abrarov et al., implemented by the call \verb+fadsamp(z)+ to the Matlab function in \cite[Appendix]{Abrar18}, which uses the method \eqref{abrar_approx3} with $\alpha=2.75$ and $M=23$ in the formulae for $\psi_{1}(z)$ and $\psi_{2}(z)$;
  \item The approximation \eqref{Zag_approx1} of Zaghloul and Ali \cite{Zag12}, implemented by the call \verb+Faddeyeva_v2(z,M)+ with $M=13$ (the maximum value permitted by the code) to the Matlab code described in \cite{Zaghloul16}: here $M$ is the number of accurate significant figures required.
\end{enumerate}
For these approximations Table \ref{table:w_compt_imes} shows estimated maximum absolute and relative errors in the first quadrant, and computation times (mean and standard deviation of 25 executions, each measured by Matlab \verb+timeit+) running Matlab version 9.3.0.713579 (R2017b) on a laptop with a single Intel64 Family 6 Model 78 2.40 GHz processor (and with \verb+maxNumCompThreads+ set to its default value of 2). The estimated maximum errors are discrete maxima over the same $1,602,801$ points as above, and we  again use as the exact value the approximation $w_{20}(z)$ given by  \eqref{wNdef}, implemented in quadruple precision by \verb+wTrap_Q(z,20)+. The computation times are for the case where \verb+z+ is a matrix containing the $16,008,001$ points $z=x+\ri y$, with $x=0(0.0025)10$, $y=0(0.0025)10$.

As measured by maximum absolute and relative errors over this large discrete set of $z$ values covering the first quadrant, our approximation $w_{11}(z)$ implemented as \verb+wTrap(z,11)+ is marginally the most accurate, though Weideman's approximation is essentially as accurate and all four methods achieve $<4\times 10^{-14}$ maximum absolute error, $<2\times 10^{-13}$ maximum relative error.

In these specific calculations our Matlab code is, on average over 25 realisations, marginally slower than that of Weideman, but faster than the other two. But little should be read into these timings, beyond, possibly,  that the first three of the methods are about equally fast, and the fourth a little slower. In particular we note that:
\begin{itemize}
\item[i)] Repeating these timings, even on exactly the same machine with exactly the same version of Matlab, will give slightly different results (see the additional timings results table in the supplementary materials).
\item[ii)] These comparisons are made with the specific Matlab implementations published by the various proposers of the algorithms. It may be that these timings could be reduced for any or all of these methods by better Matlab implementation strategies.
\item[iii)] Our timing comparisons in Table \ref{table:w_compt_imes} are for computing $w(z)$ on a uniformly distributed grid of points across the square $\{z=x+\ri y: 0\leq x\leq 10, 0\leq y\leq 10\}$. A different choice of points would lead to different results, at least for our approximation $w_N(z)$ and for the approximation of Abrarov et al., given by \eqref{abrar_approx3}, since both these approximations use somewhat different formulae in different regions of the first quadrant.
\end{itemize}

We leave to future publications more detailed comparisons of operation counts, timings, and accuracy of the above methods for computing $w(z)$ for complex $z$, along the lines of \cite{Weid94,Weid95} or \cite{Zag12,Zaghloul16}. Such publications might also study alternative, potentially more efficient implementations, for example using SIMD-aware C$++$ codes. We remark that C$++$ implementations of the method of Zaghloul and Ali \cite{Zag12} and of the continued fraction approximation \eqref{eq:nthconvergent} advocated by Gautschi \cite{WG70} (which is used in the \verb+fadsamp+ code of Abrarov et al.\ \cite{Abrar18}), are the basis of the widely used (though unpublished) Faddeeva Package of Johnson \cite{johnson_2021}.

\section*{Acknowledgments} We would like to acknowledge the detailed and helpful comments of the two anonymous referees whose input has led to significant improvements.


\end{document}